\newlist{hyp}{enumerate}{1}
\setlist[hyp]{label = {\bf(A\arabic*)}, resume}
\newlist{facts}{enumerate}{1}
\setlist[facts]{label = {\bf(F\arabic*)}, resume}
\newlist{samp}{enumerate}{1}
\setlist[samp]{label = {\bf(S\arabic*)}, resume}
\newtheorem{lemma}{Lemma}
\newtheorem{thm}{Theorem}
\newenvironment{proof}{\vspace{-0.05in}\noindent{\sc Proof:}}%
        {\hspace*{\fill}$\Box$\par}
\newenvironment{proofof}[1]{\smallskip\noindent{\sc Proof of #1.}}%
        {\hspace*{\fill}$\Box$\par}
\newdefinition{defn}{Definition}
\newdefinition{exam}{Example}
\newdefinition{rem}{Remark}
\newdefinition{assmpt}{Assumption}
\newdefinition{claim}{Claim}
\newcommand{\N}{\mathbb{N}}
\newcommand{\R}{\mathbb{R}}
\newcommand{\cC}{\mathcal{C}}
\newcommand{\cL}{\mathcal{L}}
\newcommand{\cP}{\mathcal{P}}
\newcommand{\cQ}{\mathcal{Q}}
\DeclareMathOperator{\im}{Im}
\DeclareMathOperator{\col}{col}
\newcommand{\nom}{\text{\normalfont nom}}
\newcommand{\eps}{\varepsilon}
\newcommand{\bdiag}{\text{\rm block diag\,}}
\colorlet{Darkred}{red!50!black}
\colorlet{Darkgreen}{green!50!black}
\title{Observer-Based Feedback Stabilization of Linear Systems with Event-triggered
Sampling and Dynamic Quantization\tnoteref{t1}}
\author[laas]{Aneel Tanwani\corref{cor1}}
\ead{aneel.tanwani@laas.fr}
\author[gipsa]{Christophe Prieur}
\ead{christophe.prieur@gipsa-lab.fr}
\author[gipsa]{Mirko Fiacchini}
\ead{miko.fiacchini@gipsa-lab.fr}
\address[laas]{LAAS (CNRS), 7 Avenue Colonel du Roche, 31400 Toulouse, France.}
\address[gipsa]{Gipsa-Lab (CNRS), 11 Rue des Math\'ematiques, BP 46, 38402 St. Martin d'H\`eres, France.}
\begin{document}

%
%
\begin{frontmatter}
\begin{abstract}
We consider the problem of output feedback stabilization in linear systems when the measured outputs and control inputs are subject to event-triggered sampling and dynamic quantization.
A new sampling algorithm is proposed for outputs which does not lead to accumulation of sampling times and results in asymptotic stabilization of the system.
The approach for output sampling is based on defining an event function that compares the difference between the current output and the most recently transmitted output sample not only with the current value of the output, but also takes into account a certain number of previously transmitted output samples.
This allows us to reconstruct the state using an observer with sample-and-hold measurements.
The estimated states are used to generate a control input, which is subjected to a different event-triggered sampling routine; hence the sampling times of inputs and outputs are asynchronous.
Using Lyapunov-based approach, we prove the asymptotic stabilization of the closed-loop system and show that there exists a minimum inter-sampling time for control inputs and for outputs.
To show that these sampling routines are robust with respect to transmission errors, only the quantized (in space) values of outputs and inputs are transmitted to the controller and the plant, respectively.
A dynamic quantizer is adopted for this purpose, and an algorithm is proposed to update the range and the centre of the quantizer that results in an asymptotically stable closed-loop system.

\end{abstract}

\begin{keyword}
Event-triggered sampling \sep dynamic output feedback \sep quantized control \sep limited information control \sep linear deterministic systems.
\end{keyword}

\end{frontmatter}

\section{Introduction}

For sampled data control of continuous-time dynamical systems, event-triggered techniques have regained interest over the past 5-6 years \cite{Astr08}, where the measurements are not sent periodically to the controller, but instead the sampling times are determined based on the current value of the state.
A recent article \cite{HeemJoha12} provides a tutorial exposition into the subject, and sums up most of the work done so far.
A common framework for event-triggered control involves a stabilizing feedback controller and a triggering mechanism that determines when to send the updated measurements to the controller.
While the feedback control is usually available ``off-the-shelf,'' different strategies and variants are adopted for triggering mechanism depending upon the particular problem setup.
Initial approaches for event-triggering mechanism involve keeping the difference between current value of the state and the last updated measurement relatively small \cite{AntaTabu10, LunzLehm10, Tabu07}.
Another technique is to monitor the derivative of the Lyapunov function associated with the closed-loop system, and if it starts approaching zero, then we update the measurement knowing that the new measurement will make the derivative sufficiently negative \cite{MarcDura13, PostAnta11, SeurPrie11}.
The effect of disturbances in plant dynamics could also be taken into account by such methods if the triggering mechanism is modified appropriately \cite{MazoAnta10}.
Moreover, event-triggered control has also been used for stabilization of systems in the presence of  networks \cite{DePeFras13, DePeSail13, MazoCao11, WangLemm11}.

In the references cited above, the triggering mechanisms are based on using the full-state measurements and when it comes to using output (partial state) measurements for feedback, rather than full-state feedback, then relatively little has been done.
If we directly generalize the techniques based on keeping the error (between the last sampled output and the current value of the output) small, then such methods lead to Zeno phenomenon, where we need to send infinitely many samples in finite-time and hence the technique is not feasible for implementation in practice.
Some refinements have been proposed in \cite{DonkHeem12, LehmLunz11, TallChop12}, where instead of asymptotic stabilization, the authors modify the event function to achieve practical stabilization so that the trajectories of the closed-loop system only converge to a ball defined as a design parameter.
Eventually, that parameter also determines the minimum inter-sampling time as well.
Asymptotic stability with output-feedback and event-triggered sampling has also been considered in more recent works where a certain dwell-time is enforced between two consecutive sample updates to overcome Zeno phenomenon.
The so-called periodic event-triggered control could be seen as an implementation of this idea \cite{HeemDonk13a, HeemDonk13b} where it is assumed that the continuous-time plant is already discretized with some fixed sampling-time, or a certain sampling period is precalculated to asymptotically stabilize the system. One then focuses on adding another level of sampling strategy (which is event-triggered) that would reduce the sampling rate for measurements even further. The results appearing in \cite{HeemDonk13b} for linear systems take disturbances into account, and derive minimum inter-sampling time for full-state feedback case only. The idea of forcing a certain dwell-time between two consecutive sample-updates has also been adopted in nonlinear setting for output feedback laws in \cite{AbdePost14}.

In this paper, we propose a dynamic output feedback controller for asymptotic stabilization using event-triggered sampling which does not rely on precalculating some fixed sampling period between output updates.
Our framework involves computing the sampling times for outputs and inputs separately.
Just like the approach adopted in the state-feedback case, our approach is also based on keeping the error between the current value of the output and the last sampled output small.
The crux of our approach is to compare this difference not only with the norm of the current value of the output, but with the norm of a vector that comprises some previously transmitted output measurements.
If we pick a sufficient number of past samples, then these samples contain enough information about the norm of the state (due to the observability assumption).
The controller, using these sampled outputs, is designed based on the principle of {\em certainty equivalence}. An estimate of the current state is first computed, which is in turn fed into the control law.
The control inputs transmitted to the plant are also time-sampled, where the event-triggering rule depends upon the state of the controller.
To show that our sampling algorithms are feasible for implementation, we derive an expression for minimum inter-sampling time between the sampled measurements sent to the controller and the plant.
A property of the proposed sampling routines is that the sampling times of the output and control input are not necessarily synchronized.

As an added practical consideration and to show that our strategy is robust with respect to transmission errors, we assume that the sampled outputs and sampled inputs are subjected to quantization as well, that is, the output and inputs are transmitted to the controller and the plant, respectively using a string of finite alphabets only.
However, to preserve asymptotic stability, the model of the quantizer is assumed to be dynamic as used in \cite{Libe03Aut, Libe03TAC}, that is the parameter that determines the range and the sensitivity of the quantizer can be scaled.
Event-triggered sampling with static quantization is also considered in the works of \cite{GarcAnts13} with full state feedback, and with output under a passivity assumption on the plant dynamics in \cite{YuAnts13}, but none of these works allow the possibility of designing different sampling and quantization algorithms for inputs and outputs.
The novelty in handling the quantization comes from the fact that we are working with an observer-based controller where the outputs and control inputs are both subject to quantization, and the sampling is event-based and not periodic.

To summarize, this paper proposes algorithms for event-triggered sampling and dynamic quantization of input and output measurements of linear time-invariant systems, also see Figure~1.
Such architectures could be useful when the control action is computed on a server located far away from the plant and the communication is carried over some communication channel between the plant and the controller.
The paper proposes algorithm on how and when the information between the plant and the controller must be transmitted, and the contribution could be summed up through following observations:

\begin{itemize}
\item We can achieve asymptotic stabilization using dynamic output feedback and event-triggered sampling of the output measurements without imposing time-regularization or fixed periodic sampling as done in the literature, provided we use the information of previously sampled outputs, and not just the last sampled measurement.
\item The event-triggered sampling algorithms are robust with respect to transmission errors, which in this paper manifest in the form of quantization.
If these errors vanish (which happens due to dynamic quantization) then the state of the system also converges to the origin asymptotically.
\item A trade-off between {\em how fast we sample} compared to {\em how precisely we quantize the measurements} also follows from our results. It appears in the form of design parameters introduced for sampling and quantization, respectively. In particular, when the dynamic parameter for quantization is very large (so that quantized measurements are very coarse), one has to sample quite fast, whereas smaller values of the quantization parameter (more exact measurements) possibly allow for larger inter-sampling times.
\end{itemize}
\section{Problem Setup}
\begin{figure}[!t]
\centering
\begin{tikzpicture}[xscale = 0.9, yscale = 0.8, circuit ee IEC,
every info/.style={font=\footnotesize}]
\draw (-2,0) node [rectangle, rounded corners, draw, minimum height =0.65cm, text centered] (sys) {$ \cP: \left\{ \begin{aligned}\dot x & = Ax + B q_\mu(u_\nom(\tau_j)) \\ y&=Cx\end{aligned}\right .$};
\draw (0.5,-5) node [rectangle, rounded corners, draw, minimum height =0.65cm, text centered] (obs) {\small $\cC: \left\{ \begin{aligned} &\dot z = Az + B q_\mu(u_\nom(\tau_j)) +L q_\nu(y(t_k) - Cz(t_k))\\  &u_\nom(t)= Kz(t)\end{aligned}\right .$};
\coordinate (upSamp) at ([xshift=2.5cm]sys.east);
\coordinate (downSamp) at ([xshift=-1.5cm]obs.west);
\draw (upSamp) node [rectangle, rounded corners, draw, minimum height =0.65cm, text width = 2cm, text centered] (uZOH) {Output  Sampler};
\draw (downSamp) node [rectangle, rounded corners, draw, minimum height =0.65cm, text width = 1.5cm, text centered] (dZOH) {Input Sampler};
\coordinate (tr) at ([xshift=1.5cm]uZOH.east);
\coordinate (br) at ([yshift=-5cm]tr);
\coordinate (bl) at ([xshift=-1cm]dZOH.west);
\coordinate (tl) at ([yshift=5cm]bl);
\coordinate (upr) at ([yshift=-1.25cm]tr);
\coordinate (downr) at ([yshift=-3.5cm]tr);
\coordinate (upl) at ([yshift=-1.25cm]tl);
\coordinate (downl) at ([yshift=-3.5cm]tl);
\draw (upr) node [rectangle, rounded corners, draw, minimum height =0.65cm, text centered] (qOup) {Encoder};
\draw (downr) node [rectangle, rounded corners, draw, minimum height =0.65cm, text centered] (qOdown) {Decoder};
\draw (upl) node [rectangle, rounded corners, draw, minimum height =0.65cm, text centered] (qUup) {Decoder};
\draw (downl) node [rectangle, rounded corners, draw, minimum height =0.65cm, text centered] (qUdown) {Encoder};

\draw [thick, ->] (sys.east) -- (uZOH.west);
\draw[thick] (uZOH.east) node[anchor=south west] {$y(t_k)$} -- (tr);
\draw [thick,->] (tr)--(qOup.north);
\draw [thick,dashed,->] (qOup.south) -- (qOdown.north);
\draw [thick] (qOdown.south)--(br);
\draw [thick,->] (br) -- (obs.east);
\draw [thick, ->] (obs.west) -- (dZOH.east);
\draw[thick] (dZOH.west)node[anchor=north east] {$u_\nom(\tau_j)$} -- (bl);
\draw [thick,->] (bl)--(qUdown.south);
\draw [thick,dashed,->] (qUdown.north) -- (qUup.south);
\draw [thick] (qUup.north)--(tl);
\draw [thick,->] (tl) -- (sys.west);
\end{tikzpicture}
\caption{Feedback loop where the inputs and outputs are time-sampled and quantized.}
\label{fig:loop}
\vskip -1em
\end{figure}
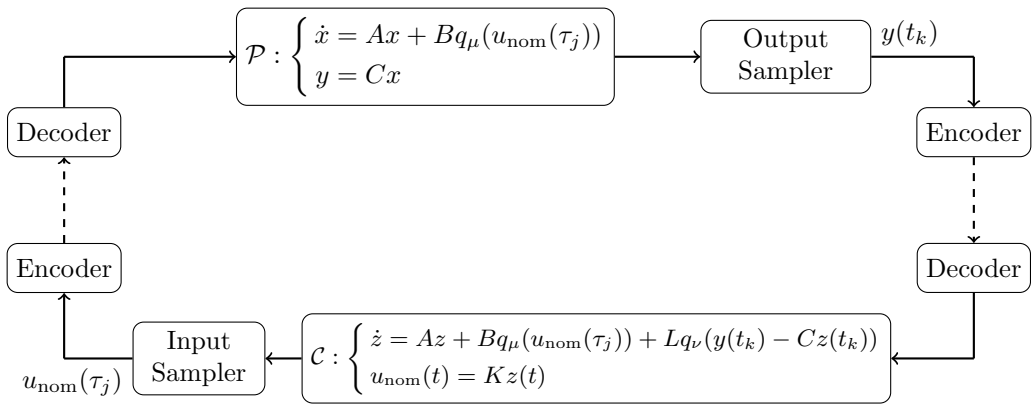

We consider linear time-invariant systems described as:
\begin{equation}\label{eq:sysLin}
\cP: \left \{
\begin{aligned}
\dot x(t) &= Ax(t) +Bu(t),\\ 
y(t) &= Cx(t)
\end{aligned}
\right .
\end{equation}
where $x(t) \in \R^n$ is the state, $y(t) \in \R^p$ is the measured output, and $u(t) \in \R^m$ is the control input.
We are interested in feedback stabilization of the control system~\eqref{eq:sysLin} by realizing the control architecture proposed in Figure~\ref{fig:loop}, which includes in particular the following design elements:
\begin{itemize}[leftmargin=1em]
\item {\bf Asynchronous event-triggered sampling:} The sampling times for outputs and inputs, denoted by $t_k$ and $\tau_j$, $j,k \in \N$, respectively,  are determined based on certain event-triggering strategies.
\item {\bf Dynamic quantization:} An update rule is specified for the design parameter (a discrete variable) of the quantizers.
\end{itemize}

\subsection{Information Processing in Closed-Loop}

Before presenting the design of controller, and the algorithms for sampling and quantization, let us briefly discuss how the information is processed and transmitted in our proposed strategy. It is assumed that there is a processor and an encoder attached next to the sensors measuring the output of the plant. The processor first determines when to send the updated values of the output using a sampling algorithm, and the encoder then determines how to encode the sampled output with finitely many alphabets. Thus, we assume that the exact values of the output are used to determine sampling instants, and then the sampled values are quantized and sent to the controller with time-stamp (because the controller cannot recompute the sampling instants itself).
This (sampled and quantized) information is then passed to a controller, where a decoder first determines the value of the output modulo certain error due to sensitivity of the quantizer. 
This output is then injected into a (continuous-time) dynamic controller which basically computes the control input for stabilization of the plant.
When communicating the inputs to the plant, the same mechanism is used as that on the output side, that is, a processor and an encoder determine when and what to transmit, and the decoder attached to the plant actuators then decodes the symbol to the actual control value (while assuming that the sampling times chosen by the controller are transmitted to the plant as well).



%
\subsection{Controller Structure}
The proposed dynamic controller $\cC$ has the following form:
\begin{subequations}\label{eq:contLin}
\begin{equation}
\dot z(t) = Az(t) + B u(t)+Lq_{\nu}(y(t_k)-Cz(t_k)), \qquad t \in [t_k,t_{k+1}), \label{eq:contLina}\\
\end{equation}
and the input $u$ is the quantized value of the nominal control law $u_\nom=Kz$ sampled at $\tau_j$ , that is,
\begin{equation}
u(t) = q_\mu(Kz(\tau_j)), \quad t \in [\tau_{j}, \tau_{j+1}).
\end{equation}
\end{subequations}
Here $y(t_k)$, and $u(\tau_j)$, $k,j\in \N$, denote the sampled values of output and input, respectively.
The output quantizer $q_\nu : \R^p \rightarrow \cQ_y$, and input quantizer $q_\mu:\R^m \rightarrow \cQ_u$ for some finite sets $\cQ_y$ and $\cQ_u$, include the design parameters $\nu: [t_0,\infty) \rightarrow \R_+$, and $\mu:[\tau_0,\infty) \rightarrow \R_+$ respectively, which are piece-wise constant and are only updated at $t_k$ and $\tau_j$ respectively. For notational convenience, we will often denote $\nu(t_k)$ by $\nu_k$, and $\mu(\tau_j)$ by $\mu_j$.
In writing equation~\eqref{eq:contLin}, it must be noted that the discrete measurements received by the controller have been passed through a sample-and-hold device, and that the state $z(\cdot)$ evolves continuously.
This approach is essentially different from some of the existing techniques adopted in for example, \cite{AndrNadr13, Libe03TAC}, where the state of the observer/controller is updated in discrete manner whenever the new measurements are available (periodically).
To implement the controller \eqref{eq:contLin}, equation \eqref{eq:contLina} is integrated over the interval $[t_k,t_{k+1})$, $k \in \N$. The expressions on the right-hand side of \eqref{eq:contLina} need quantized values of the output $y$ and the state $z$ at time $t_k$.
Also, the input $u$ needs measurements of the state $z$ updated at time instants $\tau_j$.

It is well-known that if the pair $(A,B)$ is stabilizable and $(A,C)$ is detectable, and the exact values of the output $y(\cdot)$ and input $u(\cdot)$ are continuously available, then the controller \eqref{eq:contLin} stabilizes the system \eqref{eq:sysLin}.
In essence, we are using the ``emulation approach'' where the controller first estimates the current value of the state in the presence of limited information of the output, and the feedback control law using these estimated states then stabilizes the system despite the limitations imposed due to time-sampling and space-quantization.
In the sections to follow, we will specify the algorithms for the sampling and quantization of outputs and inputs that will result in asymptotic stabilization of the closed-loop system~\eqref{eq:sysLin}-\eqref{eq:contLin}.
To derive results in that setup, the following basic assumptions are essential:
\begin{hyp}
\item \label{ass:stab}The pair $(A,B)$ is stabilizable, and hence for every symmetric positive definite matrix $Q_c$ (denoted as $Q_c > 0$) there exists a matrix $P_c > 0$ such that
\begin{equation}\label{eq:lmiCon}
(A+BK)^\top P_c + P_c(A+BK) = -Q_c.
\end{equation}
\item \label{ass:det}The pair $(A,C)$ is observable, so that for every $Q_o>0$,  there exists a matrix $P_o> 0$ such that
\begin{equation}\label{eq:lmiOut}
(A-LC)^\top P_o + P_o(A-LC) = -Q_o.
\end{equation}
\end{hyp}

For the sake of simplicity, let us also introduce the following assumption\footnote{The relaxation to consider systems with imaginary eigenvalues will be addressed in Section~\ref{sec:relax}.}
\begin{hyp}
\item \label{ass:eig} All the eigenvalues of the matrix $A$, denoted $\lambda_i(A)$, $1 \le i \le n$, are real.
\end{hyp}

\section{Output Processing Unit}\label{sec:output}

This section provides the algorithms for transmission of output from the plant to the controller, which leads to our first result stated as Theorem~\ref{thm:quantOut}.
In Section~\ref{sec:sampOut}, we give an algorithm to generate sampling times when the output must be sent to the controller.
In Section~\ref{sec:quantOut}, we then provide an encoding scheme for the sampled output measurements.
It is then shown that the proposed algorithms for sampling and quantization result in the state estimation error dynamics being asymptotically stable and that the proposed sampling strategy introduces a uniform lower bound on inter-sampling times.

\subsection{Event-triggered Sampling of the Output}\label{sec:sampOut}

Let $\tilde x:= x - z$ denote the state estimation error, then \eqref{eq:sysLin} and \eqref{eq:contLin} result in the following equations for error dynamics:
\begin{equation}\label{eq:errSol}
\begin{aligned}
\dot {\tilde x}(t) &= A\tilde x(t) - Lq_{\nu_k}(\tilde y(t_k)), \quad t\in[t_k,t_{k+1}), \\
\tilde y(t) & = C \tilde x(t).
\end{aligned}
\end{equation}

\subsubsection{Construction of a dead-beat-like estimator}

Our goal for this section is to find a relation between $\tilde x(t)$ and certain past-sampled output values of $\tilde y$ measured over the interval $[t_0,t)$ using equation~\eqref{eq:errSol}.
Basically, in \eqref{eq:errSol}, $q_{\nu_k}(\tilde y(t_k))$ acts as a known input, and using $\tilde y$ as the output, it is possible under the observability assumption to reconstruct $\tilde x$ as a function of (sufficiently many) sampled values of $\tilde y$.
Towards this end, let
\begin{equation}
\psi (s_1,s_2,s_3) := Ce^{As_1}\int_{s_2}^{s_3} e^{-As} L \, ds, \!\!\! \quad s_1 \le s_2 \le s_3,
\end{equation}
so that $\psi$ takes values in $\R^{p \times p}$. Now, for $t > t_k > t_{k-1} > \cdots > t_{k-\eta-1}\ge t_0$, define the lower-triangular matrix $\Psi_{k,\eta}(t)$ as 
\begin{equation*}
\Psi_{k,\eta}(t) :=
\begin{bmatrix} 
\psi(t_k,t_k,t) & 0 & 0 \ \cdots \\
\psi(t_{k-1},t_k,t) & \psi(t_{k-1},t_{k-1},t_k) & 0 \  \cdots \\
\psi(t_{k-2},t_k,t) & \psi(t_{k-2},t_{k-1},t_k) & \!\! \psi(t_{k-2},t_{k-2},t_{k-1})\, \cdots  \\
\vdots & \vdots & \vdots 
\end{bmatrix}
\end{equation*}
where $\eta \in \N$ is some strictly positive integer, and to keep the notation short, it is noted that $\Psi_{k,\eta}(t)$ depends on $(t,t_k,t_{k-1}, \dots, t_{k-\eta-1})$.
Next, introduce the following notation:
\begin{equation}
N_{k,\eta}(t) := \begin{bmatrix}Ce^{-A(t-t_k)} \\ Ce^{-A(t-t_{k-1})} \\ \vdots \\Ce^{-A(t-t_{k-\eta+1})}\end{bmatrix}, \quad
\widetilde Y_{k,\eta} := \begin{pmatrix} \tilde y(t_k) \\ \tilde y(t_{k-1}) \\ \vdots \\ \tilde y (t_{k-\eta+1})\end{pmatrix}, \quad
q_{\nu_{k,\eta}}(\widetilde Y_{k,\eta}) := \begin{pmatrix} q_{\nu_k}(\tilde y(t_k)) \\ q_{\nu_{k-1}}(\tilde y(t_{k-1}) ) \\ \vdots \\ q_{\nu_{k-\eta+1}}(\tilde y (t_{k-\eta+1}))\end{pmatrix}.
\end{equation}
The following lemma is a straightforward consequence of the variation of constants formula applied to system~\eqref{eq:errSol} and for completeness, the proof is given in \ref{app:proofs}.
\begin{lemma}\label{lem:relYx}
For any $t > t_k > t_{k-1} > \cdots > t_{k-\eta+1}$, it holds that
\begin{equation}\label{eq:errYk}
N_{k,\eta}(t)\tilde x (t) = \widetilde Y_{k,\eta} - \Psi_{k,\eta}(t) q_{\nu_{k,\eta}}(\widetilde Y_{k,\eta}).
\end{equation}
\end{lemma}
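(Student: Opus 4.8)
The plan is to apply the variation-of-constants formula to the error system~\eqref{eq:errSol} on the interval $[t_{k-i},t]$, once for each $i=0,1,\dots,\eta-1$, and then to stack the resulting $\eta$ block identities into the single matrix identity~\eqref{eq:errYk}. The whole argument is really just the variation-of-constants formula together with careful bookkeeping of indices and shifted exponentials.

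First I would note that on $[t_0,t]$ the forcing term in~\eqref{eq:errSol} is piecewise constant: denoting it by $\bar u(\cdot)$, it equals $q_{\nu_{k-j}}(\tilde y(t_{k-j}))$ on each subinterval $[t_{k-j},t_{k-j+1})$ and $q_{\nu_k}(\tilde y(t_k))$ on $[t_k,t]$. For a fixed $i$, integrating~\eqref{eq:errSol} from $t_{k-i}$ yields
\[
\tilde x(t)=e^{A(t-t_{k-i})}\tilde x(t_{k-i})-\int_{t_{k-i}}^{t}e^{A(t-s)}L\,\bar u(s)\,ds .
\]
Solving for $e^{A(t-t_{k-i})}\tilde x(t_{k-i})$ and left-multiplying by $Ce^{-A(t-t_{k-i})}$, the left-hand side collapses to $C\tilde x(t_{k-i})=\tilde y(t_{k-i})$, i.e.\ the $(i{+}1)$-th block of $\widetilde Y_{k,\eta}$, while the first term on the right becomes $Ce^{-A(t-t_{k-i})}\tilde x(t)$, i.e.\ the $(i{+}1)$-th block-row of $N_{k,\eta}(t)$ applied to $\tilde x(t)$; there remains a residual integral term.

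The second step is to recognize this residual term as a block-row of $\Psi_{k,\eta}(t)$ applied to $q_{\nu_{k,\eta}}(\widetilde Y_{k,\eta})$. Using $e^{-A(t-t_{k-i})}e^{A(t-s)}=e^{At_{k-i}}e^{-As}$, the residual equals $Ce^{At_{k-i}}\int_{t_{k-i}}^{t}e^{-As}L\,\bar u(s)\,ds$. Splitting $\int_{t_{k-i}}^{t}=\sum_{j=1}^{i}\int_{t_{k-j}}^{t_{k-j+1}}+\int_{t_k}^{t}$ and pulling the constant value of $\bar u$ out of each piece, the piece over $[t_{k-j},t_{k-j+1})$ becomes exactly $\psi(t_{k-i},t_{k-j},t_{k-j+1})\,q_{\nu_{k-j}}(\tilde y(t_{k-j}))$ and the last piece becomes $\psi(t_{k-i},t_k,t)\,q_{\nu_k}(\tilde y(t_k))$. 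These are precisely the nonzero entries in row $i{+}1$ of $\Psi_{k,\eta}(t)$, with the zeros to the right of the diagonal reflecting that the reconstruction of $\tilde x(t_{k-i})$ from $\tilde x(t)$ involves only the inputs active on $[t_{k-i},t]$. Stacking the identities for $i=0,\dots,\eta-1$ gives $\widetilde Y_{k,\eta}=N_{k,\eta}(t)\tilde x(t)+\Psi_{k,\eta}(t)\,q_{\nu_{k,\eta}}(\widetilde Y_{k,\eta})$, which is~\eqref{eq:errYk} after rearranging.

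The argument has no genuine obstacle; the only delicate point is the time/index alignment. One must check that the prefactor $e^{At_{k-i}}$ together with the integration limits of each split piece reproduces exactly the triple of arguments appearing in the corresponding entry of $\Psi_{k,\eta}(t)$, and that every such triple $(s_1,s_2,s_3)$ satisfies $s_1\le s_2\le s_3$ as required in the definition of $\psi$, which holds because $t>t_k>\cdots>t_{k-\eta+1}$.
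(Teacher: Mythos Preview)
Your proposal is correct and follows essentially the same approach as the paper: both proofs amount to applying the variation-of-constants formula to the error dynamics~\eqref{eq:errSol} and multiplying by $Ce^{-A(t-t_{k-i})}$ to recover $\tilde y(t_{k-i})$. The only organizational difference is that you integrate directly over $[t_{k-i},t]$ for each $i$ and split the integral according to the sampling intervals, whereas the paper integrates one subinterval at a time and recursively substitutes $\tilde x(t_{k-i})$ in terms of $\tilde x(t_{k-i+1})$; the two computations are equivalent and yield the same block-row identities that stack into~\eqref{eq:errYk}.
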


We will use this important relation~\eqref{eq:errYk} to define the sampling times for output measurements, but before proceeding to that, let us recall a few well-known results.
An important requirement in our analysis of minimum inter-sampling time is the invertibility of the matrix $N_{k,\eta}(t)$, for each $t > t_{\eta-1}$, and is achieved due to following lemma:

\begin{lemma}[{\cite[Chapter~6]{Chen99}}]
Under Assumption~\ref{ass:eig}, if we choose $\eta$ to be the observability index of the pair $(A,C)$, then the matrix $N_{k,\eta}(t)$ is left-invertible for each $t > t_{k-\eta+1}$.
\end{lemma}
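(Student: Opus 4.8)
The plan is to show that the $p\eta\times n$ matrix $N_{k,\eta}(t)$ has full column rank $n$; equivalently, that $Ce^{-A(t-t_{k-i+1})}v=0$ for $i=1,\dots,\eta$ forces $v=0$. Put $\sigma_i:=t-t_{k-i+1}$, so that $\sigma_1,\dots,\sigma_\eta$ are pairwise distinct reals; since $e^{-A\sigma_1}$ is invertible, subtracting $\sigma_1$ from every node does not change the rank, so it suffices to treat distinct nodes $\sigma_1<\dots<\sigma_\eta$. The condition then reads: the $\R^p$-valued function $h(s):=Ce^{-As}v$ vanishes at $s=\sigma_1,\dots,\sigma_\eta$.

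Next I would expose the spectral structure of $A$. Under Assumption~\ref{ass:eig} the distinct eigenvalues $\lambda_1,\dots,\lambda_r$ of $A$ are real; writing $\Pi_j$ for the spectral projections and $\ell_j$ for the size of the largest Jordan block at $\lambda_j$, one has $e^{-As}=\sum_{j=1}^r\sum_{m=0}^{\ell_j-1}\tfrac{(-s)^m}{m!}e^{-\lambda_j s}(A-\lambda_jI)^m\Pi_j$, a sum of $L:=\sum_j\ell_j=\deg(\text{min.\ poly.\ of }A)$ terms. Hence $h(s)=\sum_{j,m}\beta_{jm}(s)\,C(A-\lambda_jI)^m\Pi_j v$ with $\beta_{jm}(s):=\tfrac{(-s)^m}{m!}e^{-\lambda_j s}$, and $N_{k,\eta}(t)$ factors as $(V\otimes I_p)\,\Theta$, where $V$ is the $\eta\times L$ matrix with entries $\beta_{jm}(\sigma_i)$ and $\Theta$ stacks the $p\times n$ blocks $C(A-\lambda_jI)^m\Pi_j$.

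Two facts then finish the argument. First, because the exponents $-\lambda_j$ are real, $\{s^m e^{-\lambda_j s}\}_{j,m}$ is a (generalized) Chebyshev system on $\R$, so a nontrivial linear combination has at most $L-1$ real zeros; since $\eta\le L$ (as $A^L$ is a combination of lower powers), the matrix $V$, evaluated at $\eta$ distinct nodes, has full row rank $\eta$, and in particular no choice of sampling instants is ``pathological.'' This is precisely where Assumption~\ref{ass:eig} enters. Second, since $\eta$ is the observability index, $[\,C^\top\ (CA)^\top\ \cdots\ (CA^{\eta-1})^\top\,]^\top$ already has rank $n$; exploiting the multi-output (Kronecker) structure of $(A,C)$ one shows that the only $v$ for which $\Theta v\in\ker(V\otimes I_p)$ is $v=0$. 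This gives $C(A-\lambda_jI)^m\Pi_j v=0$ for all $j,m$, equivalently $CA^\ell v=0$ for $\ell=0,\dots,L-1$, and hence $v=0$ by observability. (Equivalently: $h$ is an $\R^p$-valued exponential polynomial with real frequencies whose vanishing at $\eta$ points, combined with the observability-index bound, forces $h\equiv 0$, hence $v=0$.)

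The delicate point, and the main obstacle, is this last matching of the sample count $\eta$ with the spectral data that must be recovered. In the single-output case $\eta=L$, so $V$ is square and nonsingular and the conclusion is immediate; for $p>1$ one may have $\eta<L$, and one must argue that $\eta$ samples spread over all $p$ output channels carry at least as much information as the blocks $C,CA,\dots,CA^{\eta-1}$ together. This is exactly the classical sampled-observability computation of \cite[Chapter~6]{Chen99}. The function of Assumption~\ref{ass:eig} is to remove the only obstruction to this: if $A$ had a pair of eigenvalues differing by a nonzero purely imaginary number, a suitably commensurate sampling interval would make $N_{k,\eta}(t)$ rank-deficient.
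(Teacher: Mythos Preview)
The paper does not supply a proof of this lemma; it is stated with a bare citation to \cite[Chapter~6]{Chen99}, so there is no ``paper's own proof'' to compare against. Your proposal therefore goes further than the paper by sketching an actual argument.

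Your reduction and factorization $N_{k,\eta}(t)=(V\otimes I_p)\,\Theta$ are correct, and the Chebyshev-system fact (real exponents $-\lambda_j$, so any nontrivial combination of the $L$ functions $s^m e^{-\lambda_j s}$ has at most $L-1$ real zeros) indeed gives $V$ full row rank $\eta$ at any $\eta$ distinct nodes. In the single-output case this yields a complete proof: observability with scalar $C$ forces every eigenvalue of $A$ to have geometric multiplicity one, hence the minimal and characteristic polynomials coincide, $\eta=n=L$, $V$ is square and nonsingular, $(V\otimes I_p)\Theta v=0$ forces $\Theta v=0$, and then $CA^\ell v=0$ for all $\ell$ gives $v=0$.

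The genuine gap is precisely the one you flag. When $p>1$ and $\eta<L$, the kernel of $V\otimes I_p$ has dimension $(L-\eta)p>0$, and the implication ``$\Theta v\in\ker(V\otimes I_p)\Rightarrow v=0$'' is the entire content of the lemma in that regime. Your next sentence, ``This gives $C(A-\lambda_jI)^m\Pi_j v=0$ for all $j,m$,'' is the assertion $\Theta v=0$, which does \emph{not} follow from $\Theta v\in\ker(V\otimes I_p)$ once that kernel is nontrivial; as written the step is circular. You are candid about this and defer back to \cite{Chen99}, but that means your proposal in the multi-output case is a restatement and attribution rather than a proof. Closing the gap requires more than the componentwise Chebyshev bound (each scalar component of $h$ may involve up to $L>\eta$ basis functions, so $\eta$ zeros per component do not force it to vanish); one must exploit how the $p$ components are coupled through the single vector $v$, e.g.\ via the observability indices $\nu_1\ge\cdots\ge\nu_p$ with $\eta=\nu_1$ and $\sum_i\nu_i=n$, which is the structure the cited textbook computation unpacks.
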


Thus, $\eta$ could be interpreted as the number of samples required for observability of the discretized system~\eqref{eq:errSol}.
The above result in general holds only under Assumption~\ref{ass:eig}, but we will address the relaxation of this assumption in Section~\ref{sec:relax}. Basically, what changes is that, we have to increase the number of samples over a compact interval to maintain the invertibility of $N_{k,\eta}(t)$. Also, $\eta$ is no longer constant in that case, and depends on the size of the time-interval over which the samples of $\tilde y$ are considered.

\subsubsection{Sampling Algorithm}

To define sampling times at which the output must be sent to the controller, we first chose $t_0 < t_1 < \cdots < t_{\eta-1}$ arbitrarily, and let
\[
f(t,\widetilde Y_{k,\eta}) = \widetilde Y_{k,\eta} - \Psi_{k,\eta}(t) q_{\nu_{k,\eta}}(\widetilde Y_{k,\eta}), \qquad k \ge \eta - 1.
\]
The output sampling time instants $t_{k+1}$ are defined as
\begin{subequations}\label{eq:syReal}
\begin{align}
t_{k+1}^{\text{event}} & := \inf \{t: \|N_{k,\eta}(t)\| \cdot |\tilde y(t) - \tilde y(t_k)| \ge \alpha \cdot |f(t, \widetilde Y_{k,\eta})| \wedge t > t_k\} \label{eq:syReala} \\
t_{k+1}^\text{pers} &:= \inf \{t: t-t_k \ge T\} \label{eq:syRealb} \\
t_{k+1} &:= \min\{t_{k+1}^{\text{event}}, t_{k+1}^\text{pers}\} \label{eq:syRealc}
\end{align}
\end{subequations}
where $\displaystyle \alpha:=\varepsilon_o\frac{\lambda_{\min}(Q_o)}{2\, \|P_oL\|}$, for some $\varepsilon_o \in (0,1)$ and $T>0$ is some prespecified constant, which can be arbitrarily large, but finite. The sampling rule \eqref{eq:syReal} guarantees that the output measurements are transmitted persistently to the controller.

\begin{rem}[Heuristics for Output Sampling]
In comparison to the existing literature on event triggered sampling strategies, expression~\eqref{eq:syReal} is new and has not appeared elsewhere.
One could motivate this particular choice of sampling rule with following arguments:
\begin{itemize}
\item It is noted that the proposed sampling rule \eqref{eq:syReala} guarantees $|\tilde y(t) -\tilde y(t_k)| \le \alpha \cdot \|N_{k,\eta}(t)\|^{-1} |N_{k,\eta}(t)\tilde x(t)| \le \alpha |\tilde x(t)|$.
If we ignore quantization for the time-being, and use $V_o=\tilde x^\top P_o \tilde x$ as the candidate Lyapunov function for the error dynamics \eqref{eq:errSol}, it can then be shown that the resulting state estimation error indeed converges to zero, see equations \eqref{eq:errDynManip} and \eqref{eq:boundVest} for mathematical expressions.

\item In the usual event driven schemes, for example the ones given in \cite{Tabu07}, the statement~\eqref{eq:syRealb} automatically holds, but in our case, due to quantization error in the output measurements, it may be that the state estimation only converges to a certain ball around origin. It may happen that the event condition described in \eqref{eq:syReala} does not become true after that, and the convergence to the origin may not be achieved.
Precisely speaking, the usual event-triggering condition using the full state information is not true as long as:
\[
|x(t) - x(t_k)| < \alpha' |x(t)|, \qquad t > t_k
\]
for some constant $\alpha' > 0$. Using the reverse triangle inequality and the fact that $x(\cdot)$ is decreasing exponentially between two updates, it follows that
\[
|x(t_k)| < (1+\alpha') |x(t)| < (1+\alpha') c_1 e^{-c_2 (t-t_k)} |x(t_k)|, \quad t > t_k.
\]
The expression on the right-hand side converges to zero, and hence for each $x(t_k) \neq 0$, the foregoing inequality is violated after some finite time.
This may not hold for event-triggering condition of the form \eqref{eq:syReala}.
To avoid that, we make sure that an updated output measurement is sent after some time to avoid such situation.
\end{itemize}
\end{rem}

\subsection{Output Quantization}\label{sec:quantOut}

We now define an encoding strategy that is used to transmit $\tilde y(t_k)$ at each time instant $t_k$ using a string of finite length.
The quantization model we use is adopted from \cite{Libe03Aut}, which is a dynamic one.
For output measurement, we assume that the quantizer has a scalable parameter $\nu$ and has the form:
\begin{equation}\label{eq:yQuant}
q_\nu(y) = \nu q^y\left(\frac{y}{\nu}\right)
\end{equation}
where $q^y(\cdot)$ denotes a finite-level quantizer with sensitivity parameterized by $\Delta_y$ and range $R_y$, that is, {\bf if $|y| \le R_y$, then $|q^y(y) -y| \le \Delta_y$}.
This way, the range of the quantizer $q_\nu(\cdot)$ is $R_y\nu$ and the sensitivity is $\Delta_y \nu$.
Increasing $\nu$ would mean that we are increasing the range of the quantizer with large quantization errors and decreasing $\nu$ corresponds to finer quantization with smaller range.
It will be assumed that the quantizer is centred around the origin, that is, $q^y(y) = 0$ if $|y| < \Delta_y$.

We remark that the stability of dynamical systems with quantized measurements has been studied extensively over the past decade, see the survey \cite{NairFagn07} and references therein. 
Stability using quantized and periodically-sampled output measurements (without asymptotic observer) was considered in \cite{Libe03TAC}, and using asymptotic observers (without sampling) was considered in \cite{Libe03Aut}. To the best of our knowledge, stability where both the inputs and outputs are quantized and aperiodically sampled, has not been treated.
In doing so, we find that the quantization parameter for control input depends upon the parameter chosen for output quantization, in order to capture the growth of the estimated state, and that there is a trade-off between how fast we sample and how precisely we quantize due to the choice of respective parameters.


We will now specify an update rule for the parameter $\nu$, so that the state estimation error $\tilde x$ converges to zero.
First, we pick $\nu_0, \cdots, \nu_{\eta-1}$ to be arbitrary. It is assumed that $\nu_\eta$ is chosen such that\footnote{
If $\tilde x(t_0)$ is known to belong to a known bounded set, then $\nu_\eta$ satisfying~\eqref{eq:nuInit} is computed from calculating an upper bound on $|\tilde x(t_\eta)|$ using the differential equation~\eqref{eq:errSol}. One can also use the relation \eqref{eq:errYk} to obtain an upper bound on $\tilde x$ at certain time, or use the strategy proposed in \cite{Libe03Aut, ShimTanw12} to get a bound on state estimation error. To keep the notation simple we have used the same index for sampling times and quantization parameter.}
$\tilde x(t_\eta)$ is contained in an ellipsoid:
\begin{equation}\label{eq:nuInit}
V_o(\tilde x(t_\eta)) \le \frac{\lambda_{\min}(P_o) R_y^2}{\|C\|^2} \, \nu_\eta^2,
\end{equation}
then $|\tilde x(t_\eta)| \le \frac{R_y}{\|C\|} \, \nu_0$ and $|C \tilde x(t_\eta)| \le R_y \nu_0$.
Suppose that we have chosen $\nu_k$ such that~\eqref{eq:nuInit} holds for $\tilde x(t_k)$, for some $k \ge \N$. We will now specify $\nu_{k+1}$ such that~\eqref{eq:nuInit} holds for $\tilde x(t_{k+1})$, for all $t_k$, $k\in \N$,  and at the same time $\lim_{k\rightarrow \infty} \nu_k = 0$.

Since the controller receives the quantized measurements only, the observer takes the following form over the interval $t \in [t_k,t_{k+1})$:
\begin{equation}\label{eq:contLinqo}
\dot z(t) = Az(t) + B u(t) +Lq_{\nu_k}(y(t_k)-Cz(t_k)).  \\
\end{equation}
The dynamics of the state estimation error for the interval $[t_k,t_{k+1})$ are:
\begin{subequations}\label{eq:errDynManip}
\begin{align}
\dot {\tilde x}(t) &= A\tilde x(t) - Lq_{\nu_k}(\tilde y(t_k))\label{eq:errDynManipa}\\
&= A\tilde x(t) - L\tilde y(t_k) - Lq_{\nu_k}(\tilde y(t_k)) + L \tilde y(t_k)\label{eq:errDynManipb}\\
& = (A-LC)\tilde x(t) +L(\tilde y(t) - \tilde y(t_k))  - \nu_k L\left(q^y\left(\frac{\tilde y(t_k)}{\nu_k}\right)-\frac{\tilde y(t_k)}{\nu_k}\right). \label{eq:errDynManipc}
\end{align}
\end{subequations}

Pick $V_o(\tilde x) = \tilde x^\top P_o \tilde x$ as the Lyapunov function, and we see that the measurement update rule \eqref{eq:syReal} leads to the following bound for $t \in [t_k, t_{k+1})$, $k \ge \eta$:
\begin{align}
\dot V_o(\tilde x(t)) & \le -\lambda_{\min}(Q_o) |\tilde x(t)|^2 + 2 \|P_oL\| \, \vert \tilde y(t) - \tilde y(t_k) \vert  \, \vert \tilde x(t) \vert + 2 \nu_k \, \Delta_y \, \|P_oL\| \, |\tilde x(t)|\\
& \le -(1-\varepsilon_o)\lambda_{\min}(Q_o) |\tilde x(t)|^2 + 2 \nu_k \, \Delta_y \, \|P_oL\| \, |\tilde x(t)|, \label{eq:boundVest}
\end{align}
where we used the fact that our output sampling algorithm gives $\vert \tilde y(t) - \tilde y(t_k) \vert \le \alpha \vert \tilde x(t) \vert$.
It then follows that, within two measurement updates, the error converges to a ball parameterized by $\nu_k$.
In particular, for some $0 < \xi_o < \frac{(1-\varepsilon_0)\lambda_{\min}(Q_o)}{\lambda_{\max}(P_o)}$, if we let
\begin{equation}\label{eq:chiOut}
\chi_o := \frac{2 \, \|P_oL\|}{(1-\varepsilon_o)\lambda_{\min}(Q_o)- \xi_o \lambda_{\max}(P_o)} 
\end{equation}
then $|\tilde x(t)| \ge \chi_o \Delta_y \nu_k$ implies that
\[
\dot V_o(\tilde x(t)) \le -\xi_o V_o (\tilde x(t)).
\]
Thus, it follows that, for $t \in [t_k,t_{k+1})$, $k \ge \eta$:
\[
V_o(\tilde x(t)) \le \max \{\lambda_{\max}(P_o)\chi_o^2 \Delta_y^2\nu_k^2, e^{-\xi_o(t-t_k)}V_o(\tilde x(t_k))\}.
\]
For each $k \ge 0$, letting
\[
\Theta_{k+1} := \max \left\{\lambda_{\max}(P_o)\chi_o^2 \Delta_y^2\nu_k^2, e^{-\xi_o(t_{k+1}-t_k)}\frac{\lambda_{\min}(P_o)R_y^2}{\|C\|^2} \nu_k^2\right\},
\]
it follows that $V_o(\tilde x(t_{k+1})) \le \Theta_{k+1}$, for $k\ge \eta$.
If we now pick $\nu_{k+1}$, $k \ge \eta$, as follows:
\begin{equation}\label{eq:upnu}
\boxed{
\nu_{k+1} := \frac{\|C\|}{R_y} \sqrt{\frac{\Theta_{k+1}}{\lambda_{\min}(P_o)}}
}
\end{equation}
then it is guaranteed that $|\tilde x(t_{k+1})| \le \frac{R_y}{\|C\|} \nu_{k+1}$ and $|C \tilde x(t_{k+1})| \le R_y \nu_{k+1}$.


\subsection{Convergence of State Estimation Error}\label{sec:outConv}

Based on the sampling strategy developed in Section~\ref{sec:sampOut}, and the quantization algorithm given in Section~\ref{sec:quantOut}, we are now ready to state our first main result which relates to the convergence of the state estimation error to the origin.

\begin{thm}\label{thm:quantOut}
Assume that the information transmitted from the plant $\cP$ to the controller $\cC$, given by $q_{\nu_k}(\tilde y(t_k))$, $k \ge 1$, is such that
\begin{itemize}
\item The sampling instants $t_k$, $k \ge \eta$, are determined by the relation \eqref{eq:syReal}.
\item For the dynamic quantizer \eqref{eq:yQuant}, the parameter $\nu_\eta$ is chosen to satisfy \eqref{eq:nuInit} and $\nu_k$, $k > \eta$, is updated according to \eqref{eq:upnu}. Moreover, the number of quantization levels determined by $R_y$ and $\Delta_y$ is such that
\begin{equation}\label{eq:bitsOut}
\boxed{
\frac{\Delta_y}{R_y} = \sqrt{\frac{\lambda_{\min}(P_o)}{\lambda_{\max}(P_o)}} \cdot \frac{\overline \rho}{\chi_o\, \|C\|}
}
\end{equation}
for some $\overline\rho \in (0,1)$.
\end{itemize}
Then the following statements holds:
\begin{itemize}
\item There is a minimum dwell-time between two sampling times, that is, there exists $t_D >0$ such that $t_{k+1} - t_k \ge t_D$, for each $k \ge \eta$.
\item The error dynamics \eqref{eq:errSol} are asymptotically stable.
\end{itemize}
\end{thm}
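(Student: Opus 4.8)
The plan is to prove the minimum‑dwell‑time assertion first and then deduce asymptotic stability of \eqref{eq:errSol} from it.

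\emph{Dwell time.} Fix $k\ge\eta$ and put $g(t):=\|N_{k,\eta}(t)\|\,|\tilde y(t)-\tilde y(t_k)|$ and $h(t):=\alpha\,|f(t,\widetilde Y_{k,\eta})|$, so that by \eqref{eq:syReala} the event at step $k$ is the first $t>t_k$ with $g(t)\ge h(t)$. First note that $g(t_k)=0$ while, by Lemma~\ref{lem:relYx}, $f(t_k,\widetilde Y_{k,\eta})=N_{k,\eta}(t_k)\tilde x(t_k)$, so $h(t_k)\ge\alpha\,\sigma_{\min}(N_{k,\eta}(t_k))\,|\tilde x(t_k)|>0$ whenever $\tilde x(t_k)\ne0$ (if $\tilde x(t_k)=0$ then $\tilde x\equiv0$ on $[t_k,\infty)$ because the quantizer is centred, and the claim is vacuous). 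The idea is then to bound the growth of $g$ and the decay of $h$ near $t_k$ so that the common factor $|\tilde x(t_k)|$ cancels. For $g$: since $|\tilde y(t)-\tilde y(t_k)|=|C(\tilde x(t)-\tilde x(t_k))|$, integrating \eqref{eq:errSol} over $[t_k,t]$ and using Gr\"onwall's inequality, distinguishing whether $\tilde y(t_k)$ lies in the dead‑zone of $q_{\nu_k}$ (so $q_{\nu_k}(\tilde y(t_k))=0$) or not (so $|q_{\nu_k}(\tilde y(t_k))|\le2|\tilde y(t_k)|\le2\|C\|\,|\tilde x(t_k)|$, using \eqref{eq:nuInit} and $|q_{\nu_k}(y)-y|\le\nu_k\Delta_y$), gives $|\tilde x(t)-\tilde x(t_k)|\le\mu(t-t_k)\,|\tilde x(t_k)|$ with a continuous $\mu$, $\mu(0)=0$, depending only on $A,L,C$. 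Hence $g(t)\le\|C\|\,\|N_{k,\eta}(t)\|\,\mu(t-t_k)\,|\tilde x(t_k)|$ and $|\tilde x(t)|\ge(1-\mu(t-t_k))|\tilde x(t_k)|$, and by Lemma~\ref{lem:relYx} again $h(t)=\alpha\,|N_{k,\eta}(t)\tilde x(t)|\ge\alpha\,\sigma_{\min}(N_{k,\eta}(t))(1-\mu(t-t_k))|\tilde x(t_k)|$. Thus no event occurs while $\|C\|\,\|N_{k,\eta}(t)\|\,\mu(t-t_k)<\alpha\,\sigma_{\min}(N_{k,\eta}(t))(1-\mu(t-t_k))$.

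What remains, and what I expect to be the main obstacle, is controlling $N_{k,\eta}(t)$ uniformly in $k$ and in $t\in[t_k,t_{k+1})$. The upper bound $\|N_{k,\eta}(t)\|\le\sqrt{\eta}\,\|C\|\,e^{\|A\|\eta T}=:\bar N$ is immediate, because \eqref{eq:syRealb} forces every inter‑sampling time to be $\le T$, hence $t-t_{k-j}\le\eta T$. The uniform \emph{lower} bound $\sigma_{\min}(N_{k,\eta}(t))\ge\underline\sigma>0$ is the delicate point: it needs the instants $t_k,\dots,t_{k-\eta+1}$ to be separated, which is itself a dwell‑time property, so I would prove this lower bound and the dwell‑time claim simultaneously by induction on $k$. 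Under the inductive hypothesis that every inter‑sampling time up to step $k$ is $\ge t_D$ (with $t_D$ also not exceeding the gaps of the arbitrarily chosen $t_0<\dots<t_{\eta-1}$), the $\eta$ rows of $N_{k,\eta}(t)$ are built from instants that are $t_D$‑separated inside an interval of length $\le\eta T$; hence $N_{k,\eta}(t)$ ranges over a compact set of matrices, each of which is left‑invertible by the lemma of \cite[Chapter~6]{Chen99} under Assumption~\ref{ass:eig}, so $\sigma_{\min}$ has a positive minimum $\underline\sigma$ on that set. Combining with the estimate above, the event cannot fire before $t_k+\tau^\star$, where $\tau^\star>0$ depends only on $\bar N,\underline\sigma,\alpha,\|C\|$ and $\mu$; choosing $t_D$ small enough that $\mu(t_D)\le\tfrac12$ and $t_D\le\min\{T,\tau^\star\}$ closes the induction and yields $t_{k+1}-t_k\ge t_D$ for every $k\ge\eta$. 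In particular $t_k\to\infty$, so there is no Zeno behaviour.

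\emph{Asymptotic stability.} With $V_o(\tilde x)=\tilde x^\top P_o\tilde x$, the bound \eqref{eq:boundVest} and the argument leading to \eqref{eq:upnu} show, by induction with base case \eqref{eq:nuInit} at $t_\eta$, that \eqref{eq:nuInit} holds at each $t_k$, $k\ge\eta$, and that, using \eqref{eq:bitsOut}, $V_o(\tilde x(t))\le\max\{\lambda_{\max}(P_o)\chi_o^2\Delta_y^2\nu_k^2,\,e^{-\xi_o(t-t_k)}V_o(\tilde x(t_k))\}\le\frac{\lambda_{\min}(P_o)R_y^2}{\|C\|^2}\nu_k^2$ on $[t_k,t_{k+1})$. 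Substituting \eqref{eq:bitsOut} into \eqref{eq:upnu} gives, by a short computation, $\nu_{k+1}^2=\max\{\overline\rho^2,\,e^{-\xi_o(t_{k+1}-t_k)}\}\,\nu_k^2$; since $\overline\rho\in(0,1)$ this already makes $(\nu_k)$ non‑increasing, and with the dwell time $t_{k+1}-t_k\ge t_D$ it gives $\nu_{k+1}\le\kappa\,\nu_k$ with $\kappa:=\max\{\overline\rho,\,e^{-\xi_o t_D/2}\}<1$. Hence $\nu_k\le\kappa^{\,k-\eta}\nu_\eta\to0$, so $V_o(\tilde x(t_k))\to0$, and then the interval bound above (whose right‑hand side is a fixed multiple of $\nu_k^2$), together with $t_k\to\infty$, gives $V_o(\tilde x(t))\to0$, i.e.\ $\tilde x(t)\to0$ as $t\to\infty$. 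Lyapunov stability of the origin follows since on the finite interval $[t_0,t_\eta]$ the solution of the linear system \eqref{eq:errSol} is bounded by a continuous function of $|\tilde x(t_0)|$ and the fixed $\nu_0,\dots,\nu_{\eta-1}$, while for $t\ge t_\eta$ one has $V_o(\tilde x(t))\le\frac{\lambda_{\min}(P_o)R_y^2}{\|C\|^2}\nu_\eta^2$ with $\nu_\eta$ chosen proportional to the bound on $|\tilde x(t_\eta)|$ as in the footnote to \eqref{eq:nuInit}. Together with the convergence just shown, this gives asymptotic stability of \eqref{eq:errSol}.
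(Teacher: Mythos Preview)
Your attractivity argument (decay of $\nu_k$ through \eqref{eq:bitsOut} and \eqref{eq:upnu}, hence of $V_o$) matches the paper's Step~1 essentially verbatim. For Lyapunov stability the paper is more careful: it isolates a separate lemma (Lemma~\ref{lem:lyapStabErr}) showing that for small $|\tilde x(t_0)|$ the quantizer outputs $q_{\nu_k}(\tilde y(t_k))$ vanish until $\nu_k$ has become small, so that on the transient the error evolves as $\dot{\tilde x}=A\tilde x$. Your sketch gestures at the same mechanism but would need this made explicit, since $\nu_0,\dots,\nu_{\eta-1}$ are fixed independently of $|\tilde x(t_0)|$.

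The dwell-time part is a genuinely different route from the paper, and it has a gap. The paper does not compare $g$ and $h$ via Gr\"onwall; it sets $v=\|N_{k,\eta}\|(\tilde y-\tilde y(t_k))$ and $w=N_{k,\eta}\tilde x$, derives a scalar Riccati-type inequality $\frac{d}{dt}\tfrac{|v|}{|w|}\le a_{1,k}\,e^{\overline\sigma(t-t_k)}\bigl((|v|/|w|)^2+a_2\,|v|/|w|+a_3\bigr)$, and solves it by a tangent-comparison lemma (Lemma~\ref{lem:solBound}), reading off the explicit bound~\eqref{eq:seqOut}. Uniformity in $k$ is then obtained from the \emph{upper} bound $t_k-t_{k-\eta+1}\le(\eta-1)T$ combined with the exponential estimates on $\|N_{k,\eta}\|$, $\|N_{k,\eta}^\dagger\|$, $\|\dot N_{k,\eta}\|$ recorded as Facts~\ref{fact:Nbound}--\ref{fact:Ndot} in the appendix.

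Your induction, by contrast, tries to manufacture a uniform lower bound $\underline\sigma$ on $\sigma_{\min}(N_{k,\eta})$ from a separation hypothesis, but the argument is circular in a way that does not close: $\underline\sigma=\underline\sigma(t_D)$ degenerates as $t_D\to0$ (the rows of $N_{k,\eta}$ coalesce, Vandermonde-style), so $\tau^\star=\tau^\star(\underline\sigma(t_D))$ shrinks with $t_D$ as well, and ``choose $t_D$ small enough with $t_D\le\tau^\star$'' is really the fixed-point requirement $t_D\le\tau^\star(\underline\sigma(t_D))$, which you have not verified. For $\eta\ge2$ one typically has $\underline\sigma(t_D)\sim t_D^{\eta-1}$, hence $\tau^\star\sim t_D^{\eta-1}\ll t_D$, and shrinking $t_D$ goes the wrong way. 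To rescue this you would need a quantitative estimate on $\underline\sigma$ in terms of the separations, not just compactness. The paper bypasses the induction by packaging the $\sigma_{\min}$-dependence into the bound $\|N_{k,\eta}^\dagger(t)\|\le c_1e^{\sigma_1(t-t_{k-\eta+1})}$ of Fact~\ref{fact:Ninv}; you may want to scrutinize whether that bound really holds with $c_1,\sigma_1$ independent of the sampling pattern, since the same degeneracy lurks there.
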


\begin{rem}[Trade-off between sampling and quantization]
In order to maximize the inter-sampling time, one may choose $\alpha$ in the expression \eqref{eq:syReala} by selecting a large value of $\eps_o$.
However, the value $\eps_o$ closer to 1 results in the larger value of $\chi_o$ introduced in \eqref{eq:chiOut}.
From expression \eqref{eq:bitsOut}, it is seen that a large value of $\chi_o$ results in a large number of quantization levels to guarantee asymptotic convergence.
Hence, slower sampling requires greater number of quantization levels and leads to faster convergence of the parameter $\nu$, and vice versa.
In order to minimize both the sampling rate and the quantization levels, that is, increase $\alpha$ without increasing $\chi$, one way is to maximize the ratio $\frac{\lambda_{\min}(Q_o)}{\|P_oL\|}$ by selecting $L$ and $Q_o$ appropriately.
\end{rem}

\begin{proofof}{Theorem~\ref{thm:quantOut}}
Assuming that the first statement holds, let us show that the second statement follows directly from the construction given in Section~\ref{sec:quantOut}.

{\em Step~1--Asymptotic Stability:}
For $k \ge \eta$, using the definition of $\nu_k$ and the condition~\eqref{eq:bitsOut}, it follows that $\nu_{k+1} = \rho \nu_k$, where $\rho:=\max\{\overline \rho, e^{-\xi_o t_D/2}\} < 1$, and we let $t_D$ denote the lower bound on the inter-sampling time for the output\footnote{
We emphasize that we don't need to calculate $t_D$ for the design of quantizer.
}, that is, $t_{k+1} - t_k \ge t_D$.
To show attractivity, we recall from the analysis of Section~\ref{sec:quantOut} that our choice of quantization parameter and sampling strategy guarantees that $V_o(\tilde x(t)) \le \Theta_k$, for each $t \in [t_{k-1},t_k]$, $k \ge \eta$.
From~\eqref{eq:upnu}, it follows that $\Theta_k= \frac{R_y^2\lambda_{\min}(P_o)}{\|C\|^2} \nu_k^2$ and thus, $\Theta_k$ converges to zero as $k$ increases because $\nu_k$ decreases monotonically and $\lim_{k\rightarrow \infty} \nu_k = 0$. Consequently, $\lim_{t \rightarrow \infty}V_o(\tilde x(t)) = 0$.

To show stability in the sense of Lyapunov for error dynamics~\eqref{eq:errSol}, we first state the following lemma whose proof is given in \ref{app:proofs}.

\begin{lemma}\label{lem:lyapStabErr}
The following statement holds for the error dynamics ~\eqref{eq:errSol}:
for every $\bar \eps> 0$, there exists $\delta_0(\bar \eps)>0$, and $\bar k (\bar \eps) \in \N$, such that $\forall \, \tilde x(t_0) \in \R^n$ with $|\tilde x(t_0)| < \delta_0$, the resulting trajectory satisfies $q_{\nu_k}(\tilde y(t_k)) = 0$ for $k < \bar k$, and $\nu_k < \bar \eps$ for $k \ge \bar k$.
\end{lemma}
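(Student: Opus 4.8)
The plan is to combine two elementary facts. First, the output quantizer is centred at the origin, so $q_{\nu_k}(\tilde y(t_k))=0$ as soon as $|\tilde y(t_k)|<\Delta_y\nu_k$. Second, on any interval on which $q_{\nu_k}(\tilde y(t_k))=0$ the error dynamics \eqref{eq:errSol} reduce to the plain linear flow $\dot{\tilde x}=A\tilde x$. Since only the \emph{finite} horizon $[t_0,t_\eta]$ is at stake before the dynamic update rule for the quantizer takes over, and since $\|e^{At}\|$ is bounded on compact time intervals, a sufficiently small $\tilde x(t_0)$ keeps $\tilde y(t_k)$ inside the dead zone of the quantizer for $k=0,\dots,\eta-1$; from then on the geometric decay $\nu_{k+1}=\rho\nu_k$, $\rho<1$, established in Step~1 of the proof of Theorem~\ref{thm:quantOut}, delivers the bound $\nu_k<\bar\eps$. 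Accordingly I would take $\bar k:=\eta$ (which in fact does not depend on $\bar\eps$).

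The first step is an induction on $k=0,1,\dots,\eta-1$. The instants $t_0<\dots<t_{\eta-1}$ and the parameters $\nu_0,\dots,\nu_{\eta-1}$ are prescribed constants, so set $M_k:=\sup_{0\le s\le t_k-t_0}\|e^{As}\|<\infty$ (with $M_0=1$). If $q_{\nu_j}(\tilde y(t_j))=0$ for all $j<k$, then $\tilde x(t)=e^{A(t-t_0)}\tilde x(t_0)$ on $[t_0,t_k]$, whence $|\tilde y(t_k)|=|C\tilde x(t_k)|\le\|C\|\,M_k\,|\tilde x(t_0)|$; thus $|\tilde x(t_0)|<\Delta_y\nu_k/(\|C\|M_k)$ forces $|\tilde y(t_k)|<\Delta_y\nu_k$ and, by the centred-quantizer property, $q_{\nu_k}(\tilde y(t_k))=0$. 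Setting
\[
\delta':=\min_{0\le k\le\eta-1}\frac{\Delta_y\,\nu_k}{\|C\|\,M_k}>0,
\]
every $\tilde x(t_0)$ with $|\tilde x(t_0)|<\delta'$ therefore yields $q_{\nu_k}(\tilde y(t_k))=0$ for all $k<\eta$ and $\tilde x(t)=e^{A(t-t_0)}\tilde x(t_0)$ for $t\in[t_0,t_\eta)$; since \eqref{eq:syRealb}--\eqref{eq:syRealc} give $t_\eta\le t_{\eta-1}+T$, this also gives $|\tilde x(t_\eta)|\le M\,|\tilde x(t_0)|$ with the fixed constant $M:=\sup_{0\le s\le t_{\eta-1}+T-t_0}\|e^{As}\|$.

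The second step controls $\nu_\eta$ and invokes the adaptive decay. Taking $\nu_\eta$ to be the smallest value satisfying \eqref{eq:nuInit} (which is exactly how $\nu_\eta$ is produced in practice from an upper bound on $|\tilde x(t_\eta)|$, see the footnote there), i.e. $\nu_\eta=\frac{\|C\|}{R_y}\sqrt{V_o(\tilde x(t_\eta))/\lambda_{\min}(P_o)}$, one gets $\nu_\eta\le c_0\,|\tilde x(t_0)|$ with $c_0:=\frac{\|C\|}{R_y}\sqrt{\lambda_{\max}(P_o)/\lambda_{\min}(P_o)}\,M$. Hence, with
\[
\delta_0(\bar\eps):=\min\Bigl\{\delta',\ \bar\eps/c_0\Bigr\},
\]
the bound $|\tilde x(t_0)|<\delta_0$ gives at once $q_{\nu_k}(\tilde y(t_k))=0$ for every $k<\bar k=\eta$ and $\nu_\eta<\bar\eps$. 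Since $\nu_{k+1}=\rho\nu_k$ with $\rho<1$ for all $k\ge\eta$ (Step~1 of the proof of Theorem~\ref{thm:quantOut}, using \eqref{eq:upnu} and \eqref{eq:bitsOut}), it follows that $\nu_k=\rho^{\,k-\eta}\nu_\eta\le\nu_\eta<\bar\eps$ for all $k\ge\bar k$, which is the claim.

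The only delicate point I anticipate is the bookkeeping: one has to check that $t_0,\dots,t_{\eta-1}$, $\nu_0,\dots,\nu_{\eta-1}$, the estimate $t_\eta\le t_{\eta-1}+T$, and the constants $M_k,M,\delta',c_0$ above depend only on the data of the problem and not on the particular $\tilde x(t_0)$ --- this is precisely what makes $\bar k$ and $\delta_0$ functions of $\bar\eps$ alone. The single genuine (but very mild) hypothesis is that the design rule for $\nu_\eta$ returns a value that vanishes as $\tilde x(t_\eta)\to0$, which is consistent with the construction of $\nu_\eta$ indicated in the footnote to \eqref{eq:nuInit}.
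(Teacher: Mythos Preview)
Your induction in the first step is exactly the right mechanism, but the second step rests on a hypothesis the paper does not grant. You take $\nu_\eta$ to be the \emph{minimal} value satisfying \eqref{eq:nuInit}, so that $\nu_\eta\to 0$ as $\tilde x(t_0)\to 0$, and this is what lets you set $\bar k=\eta$. In the paper's construction, however, $\nu_\eta$ is fixed \emph{a priori} from an upper bound on $|\tilde x(t_\eta)|$ computed from the known bounded set containing $\tilde x(t_0)$ (see the footnote to \eqref{eq:nuInit}); it does not track the actual initial condition. Thus for small $\bar\eps$ one typically has $\nu_\eta\ge\bar\eps$, and the conclusion $\nu_k<\bar\eps$ for all $k\ge\eta$ simply fails. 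The hypothesis you flag as ``very mild'' is therefore a genuine extra assumption and is not what the footnote describes.

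The paper avoids this by letting $\bar k$ depend on $\bar\eps$. With $\nu_0=\dots=\nu_\eta$ fixed design constants, the decay $\nu_{k+1}=\rho\nu_k$ for $k\ge\eta$ yields some $\bar k\ge\eta$ with $\nu_k<\bar\eps$ for $k\ge\bar k$. One then has to show $q_{\nu_k}(\tilde y(t_k))=0$ for \emph{all} $k<\bar k$, not only $k<\eta$. This is your same induction, just run over the longer horizon $[t_0,t_{\bar k}]$, which is still finite because $t_{\bar k}-t_0\le (t_{\eta-1}-t_0)+(\bar k-\eta+1)T$: while all earlier quantizer outputs vanish, $\tilde x$ evolves by $\dot{\tilde x}=A\tilde x$, and a growth bound $|\tilde x(t)|\le a\,|\tilde x(t_0)|$ on that interval together with the lower bound $\nu_k\ge\rho_{\min}^{\bar k}\nu_0$ gives $\delta_0=\rho_{\min}^{\bar k}\Delta_y\nu_0/(a\|C\|)$. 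In short, the missing ingredient is extending your induction from $\eta$ steps to $\bar k$ steps, which removes any need for $\nu_\eta$ to scale with the initial error.
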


From equation~\eqref{eq:boundVest}, for every $\eps > 0$, we can find an $\bar \eps >0$ such that if $\nu _{k} < \bar \eps$, then $\dot V_o(\tilde x (t)) < 0$ for $|\tilde x(t)| \ge \eps$.
Thus, if $\nu_{\bar k} < \bar \eps$ and $|\tilde x (t_{\bar k})| < \eps$, then $|\tilde x(t)|< \eps$, $\forall t \ge t_{\bar k}$ because $\nu$ is decreasing monotonically. Let $\delta_0(\bar \eps)$ and $\bar k (\bar \eps)$ be as given in Lemma~\ref{lem:lyapStabErr}, and choose $|\tilde x(t_0)|< \delta$, where $\delta:=\min \{\delta_0,\eps/\|e^{-A\bar k T}\| \}$. Since $q_{\nu_k}(\tilde y(t_k)) = 0$, $k < \bar k$, the solution of \eqref{eq:errSol} results in $|\tilde x(t)| < \eps$, $t \in [t_0,t_{\bar k}]$, and hence the stability follows.

\medskip

{\em Step~2--Dwell-time bound for output updates:} To derive the lower bound on inter-sampling times for output measurements, we let $v(t):=\|N_{k,\eta}(t)\|(\tilde y(t) - \tilde y(t_k))$ and $w(t):=f(t,\widetilde Y_{k,\eta}) = N_{k,\eta}(t) \tilde x(t)$, $\in[t_k,t_{k+1})$, and we now compute the minimum time it takes for $\frac{|v|}{|w|}$ to go from $0$ to $\alpha$.
To achieve that, we first derive a bound on the derivative of $\frac{|v|}{|w|}$.
Towards that end, we first recall that for any real-valued functions $v$ and $w$
\begin{align}
\frac{d}{dt} \frac{|v(t)|}{|w(t)|} & = \frac{v^\top \dot v}{|v| \cdot |w|} - \frac{w^\top \dot w}{|w|^2} \, \frac{|v|}{|w|} \le \frac{|\dot v|}{|w|} + \frac{|\dot w|}{|w|} \, \frac{|v|}{|w|}.\label{eq:ddtBound}
\end{align}
Using $N_{k,\eta}^\dagger (t)$ to denote the left pseudo-inverse of $N_{k,\eta}(t)$ (also see \ref{app:facts}), we now compute a bound on both terms appearing on the right-hand side.
\begin{align}
 \frac{|\dot v(t)|}{|w(t)|} & \le \frac {\|N_{k,\eta}(t)\|\|\dot N_{k,\eta}(t)\|\,|\tilde y(t) - \tilde y(t_k)|}{|N_{k,\eta}(t)\tilde x(t)|} + \frac{\|N_{k,\eta}(t)\| |\dot {\tilde y}(t)|}{|N_{k,\eta}(t) \tilde x(t)|} \notag \\
 & \le \|\dot N_{k,\eta}(t)\| \frac {|v(t)|}{|w(t)|} + \|N_{k,\eta}(t)\| \frac {\|CA\| \cdot |\tilde x(t)|}{|N_{k,\eta}(t)\tilde x(t)|} + \|N_{k,\eta}(t)\| \|CL\| \frac{|q_{\nu_k}(\tilde y(t_k))|}{|N_{k,\eta}(t)\tilde x(t)|} \notag\\
 & \le \|\dot N_{k,\eta}(t)\| \frac {|v(t)|}{|w(t)|} + \|N_{k,\eta}(t)\| \, \|N_{k,\eta}^\dagger(t) \| \|CA\| + \|N_{k,\eta}(t)\| \, \|CL\| \, \|N_{k,\eta}^\dagger(t)\| \frac{|q_{\nu_k}(\tilde y(t_k))|}{|\tilde x(t)|}.
 \label{eq:preBndvw}
\end{align}
To find a bound on $\frac{|q_{\nu_k}(\tilde y(t_k))|}{|\tilde x(t)|}$, we recall that $q_{\nu_k}(\tilde y(t_k)) = 0$, if $|\tilde y(t_k)| < \nu_k \Delta_y$.
Otherwise, if $|\tilde y(t_k)| \ge \nu_k \Delta_y$, we have
\begin{align}
\frac{|q_{\nu_k}(\tilde y(t_k))|}{|\tilde x(t)|} & \le \frac{\nu_k\left|q^y\left(\frac{\tilde y(t_k)}{\nu_k}\right)-\frac{\tilde y(t_k)}{\nu_k}\right| + |\tilde y(t_k)|}{|\tilde x(t)|} \notag\\
&\le \frac{\nu_k \Delta_y + |\tilde y(t_k)|}{|\tilde x(t)|} \notag \\
& \le \frac{2\,|\tilde y(t_k)|}{|\tilde x(t)|} \notag \\
& \le 2\, \frac{|\tilde y(t) - \tilde y(t_k)| + |\tilde y(t)|}{|\tilde x(t)|} \le 2 \left (\frac{|v(t)|}{|w(t)|} + \|C\|\right). \label{eq:bndqyx}
\end{align}
Substituting this relation in \eqref{eq:preBndvw}, we get
\begin{equation}\label{eq:firstTermBndvw}
 \frac{|\dot v(t)|}{|w(t)|} \le \|N_{k,\eta}(t)\| \, \|N_{k,\eta}^\dagger(t) \| \, (\|CA\| +2 \, \|CL\| \, \|C\|) + (\|\dot N_{k,\eta}(t)\| +2 \|CL\| \, \|N_{k,\eta}^\dagger(t)\|)  \frac {|v(t)|}{|w(t)|}.
\end{equation}
To find an upper bound for the second term on the right-hand side of \eqref{eq:ddtBound}, we have
\begin{align}
 \frac{|\dot w(t)|}{|w(t)|} & = \frac{|\dot N_{k,\eta}(t) \tilde x(t) + N_{k,\eta}(t) \dot {\tilde x}(t)|}{|N_{k,\eta}(t)\tilde x(t)|} \notag\\
 &= \frac{|\dot N_{k,\eta}(t) \tilde x(t) + N_{k,\eta}(t) A \tilde x(t)+ N_{k,\eta}(t)L q_{\nu_k}(\tilde y(t_k))}{|N_{k,\eta}(t) \tilde x(t)|} \notag\\
 & \le \|N_{k,\eta}^\dagger(t) \| \, \| \dot N_{k,\eta}(t) \| + \|N_{k,\eta}^\dagger(t) \| \, \| N_{k,\eta}(t) A\| + 2 \|N_{k,\eta}^\dagger(t) \| \, \|L\| \left(\frac {|v(t)|}{|w(t)|} + \|N_{k,\eta}(t)\|\|C\| \right). \label{eq:wdot/w}
\end{align}
Substituting \eqref{eq:firstTermBndvw} and \eqref{eq:wdot/w} in \eqref{eq:ddtBound}, we can write
\begin{equation}\label{eq:outTempBoundRatio}
\frac{d}{dt} \frac{|v(t)|}{|w(t)|} \le g_1(t) + g_2(t) \frac{|v(t)|}{|w(t)|} + g_3(t) \frac{|v(t)|^2}{|w(t)|^2}
\end{equation}
where $g_1(t):= \|N_{k,\eta}(t)\| \,\|N_{k,\eta}^\dagger(t) \| (\|CA\| + 2 \|CL\| \|C\| )$, $g_2(t):= \|\dot N_{k,\eta}(t)\| + \|N_{k,\eta}^\dagger(t) \| \big( 2\|CL\| + \|\dot N_{k,\eta}(t) \| + \|N_{k,\eta}(t) A \| +2 \|N_{k,\eta}(t)\| \| L \| \|C\| \big)$, and $g_3(t):= 2 \|N_{k,\eta}^\dagger (t)\| \,\|L\|$.
Using the facts \ref{fact:Nbound}, \ref{fact:Ninv}, \ref{fact:Ndot} listed in \ref{app:facts}, and letting $\overline \sigma: = \max\{\sigma_1 + \sigma, \sigma_1+ \sigma_2\}$, we can upper bound the right-hand side of \eqref{eq:outTempBoundRatio} as follows:
\begin{equation}\label{eq:outSampdBound}
\frac{d}{dt} \frac{|v(t)|}{|w(t)|} \le a_{1,k} \,e^{\overline \sigma(t -t_k)}\left(\frac{|v(t)|^2}{|w(t)|^2} + a_2 \frac{|v(t)|}{|w(t)|} + a_3\right)
\end{equation}
where $a_{1,k} := c_1 e^{\overline \sigma(t_k-t_{k-\eta+1})} \|L\|$, $a_2:= (1+c)\|C\| + (c_1+c_2+c\|A\|)/\|L\|$ and $a_3:=c\|C\|(\|C\|+\|A\|/\|L\|)$.
To derive an expression for minimum inter-sampling times, we state the following lemma, the proof of which is given in \ref{app:proofs}.
\begin{lemma}\label{lem:solBound}
For some positive scalars, $a_1,a_2,a_3, \overline \sigma$, if $X: [t_k,\infty) \rightarrow \R$ satisfies
\[
\frac{dX}{dt} \le a_1 e^{\overline \sigma(t-t_k)}(X^2 + a_2 X + a_3 )
\]
with initial condition $X(t_k) = 0$, then there exists $\tilde t_D>0$ such that 
\[
X(t) \le \frac{ra_2}{2} \tan(a_4(e^{\overline \sigma(t-t_k)}-1)), \qquad t-t_k < \tilde t_D,
\]
for some large enough positive integer $r$, and the positive scalar $a_4:=\frac{ra_2a_1}{2\overline\sigma}$.
\end{lemma}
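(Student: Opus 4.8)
The plan is to turn the scalar differential inequality into a comparison ODE that is separable and whose closed‑form solution is exactly the claimed tangent expression. The one wrinkle is that the quadratic $X^2 + a_2 X + a_3$ cannot be dominated globally by a pure ``$X^2 + \beta^2$'' form, so I will first absorb the linear term on a suitable half‑line and then run a continuation argument to keep $X$ confined to that half‑line on a short time interval.

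First I would choose the positive integer $r$ large enough that $\beta := \frac{ra_2}{2}$ satisfies $\beta^2 > a_3$, and set $M := (\beta^2 - a_3)/a_2 > 0$. Then for every $X \le M$ one has $a_2 X + a_3 \le \beta^2$, hence $X^2 + a_2 X + a_3 \le X^2 + \beta^2$; note this also covers $X < 0$, so no sign analysis of $X$ is needed. Consequently, as long as $X(t) \le M$, the hypothesis gives $\dot X(t) \le a_1 e^{\overline\sigma(t-t_k)}\bigl(X(t)^2 + \beta^2\bigr)$.

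Second, I would solve the comparison ODE $\dot\phi = a_1 e^{\overline\sigma(t-t_k)}(\phi^2 + \beta^2)$, $\phi(t_k)=0$, by separation of variables: dividing by $\phi^2+\beta^2$ and integrating yields $\frac{1}{\beta}\arctan(\phi(t)/\beta) = \frac{a_1}{\overline\sigma}\bigl(e^{\overline\sigma(t-t_k)}-1\bigr)$, i.e. $\phi(t) = \frac{ra_2}{2}\tan\!\bigl(a_4(e^{\overline\sigma(t-t_k)}-1)\bigr)$ with $a_4 = \frac{ra_2 a_1}{2\overline\sigma}$, valid on the interval where the tangent's argument stays below $\pi/2$. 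Then comes the continuation step: since $\phi$ is continuous and increasing with $\phi(t_k)=0<M$ and $a_4(e^{\overline\sigma\cdot 0}-1)=0<\pi/2$, there is $\tilde t_D>0$ such that on $[t_k,t_k+\tilde t_D)$ one has simultaneously $\phi(t)<M$ and $a_4(e^{\overline\sigma(t-t_k)}-1)<\pi/2$. Let $T^*$ be the first time $X(t)=M$ (or $+\infty$). On $[t_k,\min\{T^*,t_k+\tilde t_D\})$ the inequality $\dot X \le a_1 e^{\overline\sigma(t-t_k)}(X^2+\beta^2)$ holds along the trajectory, so the standard comparison lemma gives $X(t)\le\phi(t)$; if $T^*<t_k+\tilde t_D$, then $X(T^*)=M$ while $X(T^*)\le\phi(T^*)<M$, a contradiction. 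Hence $T^*\ge t_k+\tilde t_D$ and $X(t)\le\phi(t)$ on $[t_k,t_k+\tilde t_D)$, which is precisely the assertion.

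The main obstacle is exactly the localized nature of the domination $X^2+a_2X+a_3\le X^2+\beta^2$: because it fails for large $X$, a textbook comparison lemma cannot be invoked on $[t_k,\infty)$, and the care lies in the bootstrap argument that keeps $X$ below $M$ on a short interval together with the truncation at the blow‑up time of $\tan$ (when its argument reaches $\pi/2$). Once $\tilde t_D$ is fixed by those two constraints, everything else is routine.
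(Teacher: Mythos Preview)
Your argument is correct and produces exactly the bound in the lemma. It is, however, a genuinely different route from the paper's.

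The paper does not try to kill the linear term $a_2 X$. Instead it enlarges the constant, replacing $a_3$ by $\tilde a_3 := (r^2+1)a_2^2/4 \ge a_3$, so that $Z^2 + a_2 Z + \tilde a_3 = (Z + a_2/2)^2 + (ra_2/2)^2$ is a completed square with strictly positive remainder. The domination $X^2 + a_2 X + a_3 \le X^2 + a_2 X + \tilde a_3$ then holds for \emph{all} real $X$, so the comparison lemma applies globally on the maximal interval of existence and no bootstrap is needed. The price is that the exact solution $Z(t)$ carries an offset $\arctan(1/r)$ inside the tangent and a shift $-a_2/2$ outside; the paper removes these via the tangent addition formula, at the cost of a further time restriction (its $\tilde t_D$ is the time at which $\tan(a_4(e^{\overline\sigma(t-t_k)}-1)) = r$).

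Your approach trades that algebraic post-processing for the continuation step: by absorbing $a_2 X + a_3$ into $\beta^2$ you get the clean ODE $\dot\phi = a_1 e^{\overline\sigma(t-t_k)}(\phi^2+\beta^2)$ whose solution is exactly the claimed bound, but the domination only holds for $X \le M$, so you must argue that $X$ cannot reach $M$ before $\phi$ does. Both arguments are short; yours is perhaps more elementary in the calculus but requires the extra care of tracking the first exit time $T^*$, whereas the paper's global comparison avoids that entirely.
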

Applying Lemma~\ref{lem:solBound} to \eqref{eq:outSampdBound}, there exists $\tilde t_{D,k} > 0$ such that
\[
\frac{|v(t)|}{|w(t)|} \le \frac{ra_2}{2} \tan(a_{4,k}(e^{\overline \sigma(t-t_k)}-1)), \quad t-t_k < \tilde t_{D,k},
\]
for some $r$ large enough and $a_{4,k} : = \frac{r a_2 a_{1,k}}{2\overline \sigma}$. Now, it is easily verified that $\frac{|v(t)|}{|w(t)|} \ge \alpha$ for $t > t_k$, only if
\begin{equation}\label{eq:seqOut}
 (t-t_k) \ge \frac{1}{\overline \sigma}\log \left(1+\frac{1}{a_{4,k}}\arctan\left(\frac{2\,\alpha}{r a_2}\right)\right).
\end{equation}
The sampling strategy~\eqref{eq:syReal} guarantees that $t_{k+1} - t_k \le T$, for all $k \ge \eta$, so that the bound $a_{4,k} \le \frac{ra_2c_1\|L\|}{2\overline \sigma} e^{\overline \sigma(\eta-1)T}$ holds for all $k \ge \eta$ and using this bound in \eqref{eq:seqOut} gives a uniform lower bound for inter-sampling times.
\end{proofof}
\begin{rem}
Once we arrive at \eqref{eq:seqOut} towards the end of Theorem~1, it can be shown that a sequence of numbers lower bounded by the right-hand side of \eqref{eq:seqOut} does not converge (without using any upper bound on $t_{k+1} - t_k$), and hence the corresponding series diverges showing that there is no accumulation point for sampling times. However, to derive an expression for uniform lower bound on inter-sampling times, and guarantee persistent sampling, we do assume that $t_{k+1}-t_k \le T$.
\end{rem}

\subsection{Relaxing Assumption~\ref{ass:eig}}\label{sec:relax}

We now want to consider the case when the matrix $A$ doesn't necessarily have real eigenvalues. In that case, Lemma~\ref{lem:relYx} does not hold in general. In this section, we argue that the invertibility of the matrix can still be guaranteed if we work with sufficiently large number of samples over a fixed interval. We recall the following result from \cite{WangLi11}:

\begin{lemma}
Let $\omega:=\max_{1\le i,j \le n} \{\im(\lambda_i(A) - \lambda_j(A))\}$. If
\begin{equation}
\eta > 2(n-1) + \frac{T_s}{2\pi} \omega
\end{equation}
then the matrix $\col(Ce^{As_1}, Ce^{As_2}, \cdots, Ce^{As_\eta})$ is left-invertible for all $s_1,s_2, \dots,s_\eta \in [0,T_s]$.
\end{lemma}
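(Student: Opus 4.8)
The plan is to argue by contradiction, using the observability of $(A,C)$ — the standing Assumption~\ref{ass:det} — together with a classical bound on the number of real zeros of an exponential polynomial. Here I read $s_1,\dots,s_\eta$ as \emph{distinct}, which is the relevant case (in every application $s_i=t-t_{k-i+1}$ with $t_0<t_1<\cdots$) and is in fact necessary, since for coincident arguments $\col(Ce^{As_1},\dots,Ce^{As_\eta})$ has rank at most $\rank C$.

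Suppose $M:=\col(Ce^{As_1},\dots,Ce^{As_\eta})\in\R^{p\eta\times n}$ is not left-invertible. Then $\rank M<n$, so there is a nonzero $v\in\R^n$ with $Ce^{As_i}v=0$ for every $i$. I would then reduce to a scalar function: the real-analytic map $h(t):=Ce^{At}v$ is not identically zero by observability of $(A,C)$, so $h(t^\ast)\neq0$ for some $t^\ast$; taking $w:=h(t^\ast)$ and $g(t):=w^\top Ce^{At}v$ gives a scalar function with $g(t^\ast)=|h(t^\ast)|^2>0$, hence $g\not\equiv0$, while $g(s_i)=w^\top(Ce^{As_i}v)=0$ for all $i$. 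Thus $g$ has at least $\eta$ distinct zeros in $[0,T_s]$.

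Now I would exploit the structure of $g$. Expanding $e^{At}$ via the Jordan form of $A$, every entry of $e^{At}$ — and therefore $g$ — is an exponential polynomial $g(t)=\sum_k p_k(t)e^{\lambda_k t}$ with the $\lambda_k$ distinct eigenvalues of $A$ and $\deg p_k\le m_k-1$ ($m_k$ the algebraic multiplicity), so $\sum_k(\deg p_k+1)\le\sum_k m_k\le n$. The key input is the classical estimate that a nontrivial real exponential polynomial of this form has at most $\bigl(\sum_k(\deg p_k+1)\bigr)-1+\frac{L}{2\pi}\bigl(\max_k\im\lambda_k-\min_k\im\lambda_k\bigr)$ zeros on any interval of length $L$ (sanity check: for $g=\cos\beta t$ this gives $1+\beta L/\pi$, the right count). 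Since $\sum_k(\deg p_k+1)\le n$ and $\max_k\im\lambda_k-\min_k\im\lambda_k\le\max_{i,j}\im(\lambda_i-\lambda_j)=\omega$, $g$ has at most $(n-1)+\frac{T_s}{2\pi}\omega$ zeros in $[0,T_s]$. Combined with the previous step this forces $\eta\le(n-1)+\frac{T_s}{2\pi}\omega$, contradicting $\eta>2(n-1)+\frac{T_s}{2\pi}\omega$. (This actually yields the conclusion already under the weaker $\eta>(n-1)+\frac{T_s}{2\pi}\omega$; the factor $2$ in the cited form is simply a coarser count in \cite{WangLi11}.)

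The one non-elementary ingredient — and the step I expect to be the real obstacle — is the zero-counting estimate, specifically the $\frac{L}{2\pi}\cdot(\text{imaginary spread})$ term. I would cite it rather than reprove it: it is the classical bound on the zeros of an exponential sum, obtained by a contour/argument-principle argument that counts the winding of $g(z)$ — suitably renormalized by an exponential factor $e^{-\frac12(\lambda_{\max}+\lambda_{\min})z}$ — around the boundary of a thin horizontal rectangle over $[0,L]$; see the literature on zeros of exponential polynomials (P\'olya--Szeg\H{o}) and the precise packaging used in \cite{WangLi11}. The polynomial-degree refinement $\sum_k(\deg p_k+1)-1$ can alternatively be produced by a Rolle-type induction that strips one exponential mode at a time. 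Everything else in the argument is linear algebra.
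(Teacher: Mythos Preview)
The paper does not prove this lemma at all: it is quoted verbatim as a result from \cite{WangLi11} and used as a black box. So there is no ``paper's own proof'' to compare against; your sketch is in effect a reconstruction of the argument in \cite{WangLi11}.

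That said, your outline is the right one and is essentially how such results are proved: reduce left-invertibility failure to a nonzero $v$ with $Ce^{As_i}v=0$, pass to a scalar exponential polynomial $g(t)=w^\top Ce^{At}v$ that is not identically zero (by observability) yet vanishes at the $\eta$ distinct sample points, and contradict a zero-counting bound for exponential polynomials. Your remark that distinctness of the $s_i$ is needed (and is satisfied in the paper's application, where the $s_i$ are $t-t_{k-i+1}$ with strictly ordered $t_k$) is a correct and necessary caveat that the lemma's statement in the paper suppresses.

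The only point I would flag is your parenthetical claim that the conclusion already holds under the sharper hypothesis $\eta>(n-1)+\tfrac{T_s}{2\pi}\omega$. The exact constant in the zero-count for exponential polynomials depends delicately on the version one cites (P\'olya--Szeg\H{o}, Tijdeman, Voorhoeve give related but not identical bounds, and some count complex zeros in strips rather than real zeros on intervals); the factor $2(n-1)$ in \cite{WangLi11} may reflect the specific estimate they invoke rather than mere slack. Since you only need the stated bound, this does not affect the validity of your argument for the lemma as written---but I would drop the aside unless you are prepared to pin down and verify the sharper zero bound precisely.
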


To use this lemma for our problem setup, we first fix some integer $\eta^* > 2(n-1)$.
Having picked $t_0 < t_1 < \dots < t_{\eta^*-1} $ arbitrarily, we choose the next sampling time $t_{k+1}:= \min \{t_{k+1}^{\text{event}}, t_{k+1}^{\text{sample}}\}$ recursively, where
\[
t_{k+1}^{\text{sample}}  := \inf\{t: t- t_{k-\eta^*+1} > \min\left\{ \frac{2\pi}{\omega} \left(\eta^* - 2(n-1)\right), \eta^*T \right\} \ \wedge \ t > t_k \}
\]
and $t_{k+1}^{\text{event}} $ is defined as
\[
t_{k+1}^{\text{event}}  := \inf \{t: \|N_{k,\eta}(t)\| \cdot |\tilde y(t) - \tilde y(t_k)| \ge \alpha \cdot |f(t, \widetilde Y_{k,\eta})| \wedge t > t_k\}
\]
where once again, we pick $\displaystyle \alpha:=\varepsilon_o\frac{\lambda_{\min}(Q_o)}{2\, \|P_oL\|}$, for some $\varepsilon_o \in (0,1)$, and $T>0$ is some constant.
In the case, when $\omega = 0$, which is the case if all eigenvalues of $A$ are real, then $t_{k+1}^{\text{sample}}$ matches the definition given in \eqref{eq:syRealb}.
The stability analysis would not be affected by this modified sampling strategy, and the existence of a lower bound could be shown by modifying the proof given earlier.

\section{Input Processing Unit}\label{sec:input}

We can tailor the ideas introduced in the previous section to derive a sampling algorithm and a quantization strategy for the control input.

\subsection{Sampling Algorithms for Inputs}\label{sec:sampIn}

Let $\tau_0 = t_\eta$, and choose the control input $u$, so that $u(t) = 0$, for $t \in [t_0, \tau_0)$, and $u(\tau_0) = Kz(\tau_0) = Kz(t_{\eta})$, and the next update is performed at $\tau_{j+1}$, which, for $j \ge 0$, is defined as follows:
\begin{subequations}\label{eq:su}
\begin{align}
\tau_{j+1}^{\text{event}} &:=\inf \{t:|Kz(t) - Kz(\tau_j)| \ge \left(\beta_c |z(t)| + \beta_o\frac{R_y}{\|C\|}\nu(t)\right) \ \wedge \ t > \tau_j\},\label{eq:sua}\\
\tau_{j+1}^{\text{pers}} &:= \inf\{t: t-\tau_j \ge T\}, \\
\tau_{j+1} &:= \min\{\tau_j^{\text{event}}, \tau_j^{\text{pers}}\},
\end{align}
\end{subequations}
where $\displaystyle \beta_c:=\varepsilon_c\frac{\lambda_{\min}(Q_c)}{2 \|P_cB\|}$, and $\beta_o:=\tilde \beta \beta_c \|C\|$ for some $\varepsilon_c \in (0,1)$, and $\tilde \beta >0$.

Note that the term $\nu(\cdot)$ is only piecewise constant and does not vary continuously with time. In case there is a time $t_k > \tau_j$ such that $|Kz(t_k) - Kz(\tau_j)| < \beta_c |z(t_k)| + \beta_o R_y\nu(t_k^-)$, and due to sudden change in the value of $\nu$ at time $t_k$, it happens that $|Kz(t_k) - Kz(\tau_j)| \ge \beta_c |z(t_k)| + \beta_o R_y\nu(t_k^+)$, then in that case we assume that $\tau_{j+1} = t_k$, and hence the control input is updated instantaneously without any delay.

\subsection{Input Quantization}\label{sec:quantIn}
In our setup, the control input cannot be transmitted to the plant with exact precision and only $q_{\mu_j}(Kz(\tau_j))$, $j \in \N$ is transmitted to the plant.
The quantization model used for control inputs is similar to the one adopted for outputs, that is,
\[
q_\mu(u) = \mu\,q^u\left(\frac{u}{\mu}\right)
\]
where $\mu$ denotes the scaling parameter, and $q^u$ is a finite-level quantizer whose range is denoted by $R_u$, and the sensitivity by $\Delta_u$.
We specify an update rule for the parameter $\mu_j$ associated with the input quantizer such that the resulting closed-loop system is still globally asymptotically stable.
In order to do that, we choose $z(\tau_0)$ such that
\[
V_c(z(\tau_0)) \le \frac{\lambda_{\min}(P_c)R_u^2}{\|K\|^2}\mu_0^2.
\]
With quantized inputs and outputs, the dynamical system \eqref{eq:contLin} is thus written as:
\begin{align*}
\dot z (t) &= (A+BK)z(t) + B(u(t) - Kz(t)) + L (q_{\nu_k} (y(t_k)) - Cz (t_k)), \quad t,t_k \in [\tau_j,\tau_{j+1}) \\
&=(A+BK)z(t) +BK (z(\tau_j) - z(t))  + \mu_j B\left(q\left(\frac{Kz(\tau_j)}{\mu_j}\right) - \frac{Kz(\tau_j)}{\mu_j}\right) + L (q_{\nu_k} (y(t_k) - Cz (t_k))). 
\end{align*}
With $V_c(z) = z^\top P_c z$ as the Lyapunov function, and the control update rule \eqref{eq:su}, we observe that
\begin{multline}
\dot V_c (z(t)) \le -(1-\varepsilon_c) \lambda_{\min}(Q_c) |z(t)|^2 + |z(t)| (\tilde \beta \eps_c \lambda_{\min}(Q_c) R_y\nu_{k^*(t)} + 2\|P_cB\| \Delta_u \mu_j) \\+ 2 \, |z(t)|\, \| P_c L\| (|\tilde y(t_{k^*(t)}) |+\nu_{k^*(t)}\Delta_y)
\end{multline}
where 
\[
k^*(t):= \max \{k \in \N : t_k \le t\}.
\]
From our output quantization scheme, we have that $|\tilde y(t_k)| = |C\tilde x(t_k)| \le R_y \nu_k$, for all $k \in \N$, and $\nu_{k^*(\tau_j)} \ge \nu_{k^*(t)}$, for $t \ge \tau_j$.
For a fixed $0 < \xi_c < \frac{(1-\varepsilon_c)\lambda_{\min}(Q_c)}{\lambda_{\max}(P_c)}$, we introduce the constants
\begin{equation}\label{eq:chiIn}
\chi_c := \frac{2\|P_cB\|}{(1-\varepsilon_c)\lambda_{\min}(Q_c)-\xi_c \lambda_{\max}(P_c)}
\end{equation}
and
\[
\zeta_1 := \frac{\tilde \beta \eps_c\lambda_{\min}(Q_c)+ 2\|P_cL\|}{(1-\varepsilon_c)\lambda_{\min}(Q_c)-\xi_c \lambda_{\max}(P_c)}, \quad 
\zeta_2 := \frac{2\|P_cL\|}{(1-\varepsilon_c)\lambda_{\min}(Q_c)-\xi_c \lambda_{\max}(P_c)}.
\]
It is noted that if $|z(t)| \ge \overline \chi_j := \chi_c \Delta_u \mu_j + (\zeta_1R_y + \zeta_2\Delta_y)\nu_{k^*(\tau_j)}$, then
\[
\dot V_c(z(t)) \le -\xi_c V_c (z(t)).
\]
Assuming that $z(\tau_j)$ is contained in an ellipsoid defined as:
\[
V_c(z(\tau_j)) \le \frac{\lambda_{\min}(P_c)R_u^2}{\|K\|^2} \mu_j^2,
\]
we let
\[
\Theta^u_{j+1} := \max \left\{\lambda_{\max}(P_c)\overline \chi_j^2, e^{-\xi_c(\tau_{j+1}-\tau_j)}\frac{\lambda_{\min}(P_c)R_u^2}{\|K\|^2} \mu_j^2 \right\}.
\]
Choose $\mu_{j+1}$ such that
\begin{equation}\label{eq:upmu}
\boxed{
\mu_{j+1}^2 = \frac{\|K\|^2 \Theta^u_{j+1}}{R_u^2 \lambda_{\min}(P_c)}
}
\end{equation}
then it is guaranteed that $| z(\tau_{j+1})| \le \frac{R_u}{\|K\|} \mu_{j+1}$, and $|Kz(\tau_{j+1})| \le R_u \mu_{j+1}$.

\begin{rem}
In order to implement the quantization algorithm for the control inputs, it must be noted that the parameter $\mu$ actually depends on the parameter $\nu$ used for the quantization of $\tilde y$.
This is done because the evolution of the controller state $z$ actually depends upon the quantized values of $\tilde y$, and to determine the region that contains the state $z$ at current time instant, we use the knowledge of how large $\tilde y$ is, which is indeed captured by the most recent value of $\nu$.
\end{rem}

\subsection{Asymptotic Stability of the Plant}

Based on the sampling routine and quantization algorithm proposed for the control input, we now show that the resulting plant dynamics are indeed asymptotically stable.
More formally, we have the following result:

\begin{thm}
Suppose that the output sent to the controller satisfies the design criteria of Theorem~\ref{thm:quantOut}, and the control input $u(t) = q_{\mu_j}(Kz(\tau_j))$, $t \in [\tau_j,\tau_{j+1})$ is such that
\begin{itemize}
\item The input sampling instants $\tau_j$, $j \ge 1$ are determined by~\eqref{eq:su}.
\item The parameter $\mu_j$ for the dynamic quantization of the input is updated according to~\eqref{eq:upmu}, and the quantization levels of $q^u$ determined by $R_u$ and $\Delta_u$ satisfy the following relation:
\begin{equation}\label{eq:bitsCon}
\boxed{
\frac{\Delta_u}{R_u} = \sqrt {\frac{\lambda_{\min}(P_c)}{\lambda_{\max}(P_c)}} \frac{\overline \rho_u}{\chi_c \|K\| }
}
\end{equation}
for some $\overline \rho_u \in (0,1)$.
\end{itemize}
Then, the following statements hold:
\begin{itemize}
\item There is a minimum dwell-time between two control input updates, that is, there exists $\tau_D > 0$ such that $\tau_{j+1} - \tau_j \ge \tau_D$, for every $j \ge 0$.
\item The closed-loop system \eqref{eq:sysLin}-\eqref{eq:contLin} is asymptotically stable.
\end{itemize}
\end{thm}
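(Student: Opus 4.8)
The plan is to reproduce the two-step architecture of the proof of Theorem~\ref{thm:quantOut}: assuming the dwell-time bound, prove asymptotic stability of the plant, and then prove the dwell-time bound for the input updates.

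\emph{Step~1 (asymptotic stability).} Combining the update rule~\eqref{eq:upmu} with the bit-rate relation~\eqref{eq:bitsCon} as in Step~1 of the proof of Theorem~\ref{thm:quantOut} --- expanding $\overline\chi_j=\chi_c\Delta_u\mu_j+(\zeta_1R_y+\zeta_2\Delta_y)\nu_{k^*(\tau_j)}$ and using~\eqref{eq:bitsCon} to turn the $\mu_j$-contribution into $\overline\rho_u\mu_j$ --- one obtains the recursion $\mu_{j+1}\le\rho_u\mu_j+c_\mu\,\nu_{k^*(\tau_j)}$ with $\rho_u:=\max\{\overline\rho_u,e^{-\xi_c\tau_D/2}\}<1$ and a constant $c_\mu>0$. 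The dwell-time bound $\tau_{j+1}-\tau_j\ge\tau_D$ (Step~2) and the persistency $\tau_{j+1}-\tau_j\le T$ force $\tau_j\to\infty$, and since the output samples are also persistent, $k^*(\tau_j)\to\infty$; hence $\nu_{k^*(\tau_j)}\le\rho^{\,k^*(\tau_j)-\eta}\nu_\eta\to0$ by Theorem~\ref{thm:quantOut}. A routine discrete comparison (convolution of a summable geometric sequence with a null sequence) gives $\mu_j\to0$. From the bound $V_c(z(t))\le\Theta^u_j=\tfrac{R_u^2\lambda_{\min}(P_c)}{\|K\|^2}\mu_j^2$ on $[\tau_{j-1},\tau_j]$ established in Section~\ref{sec:quantIn}, we conclude $z(t)\to0$, and therefore $x(t)=\tilde x(t)+z(t)\to0$ using $\tilde x(t)\to0$ from Theorem~\ref{thm:quantOut}. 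Lyapunov stability is obtained as in Lemma~\ref{lem:lyapStabErr}: for sufficiently small initial data the initialising parameters $\nu_\eta,\mu_0$ may be taken small, both quantizers output zero over an index-bounded initial window on which the closed loop is linear (hence with uniformly bounded growth), and once $\nu_k,\mu_j$ are small enough $\dot V_o<0$ and $\dot V_c<0$ off a small ball, so $\tilde x$ and $z$ --- hence $x$ --- remain small thereafter.

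\emph{Step~2 (dwell-time for input updates).} Following Step~2 of the proof of Theorem~\ref{thm:quantOut}, put $v(t):=Kz(t)-Kz(\tau_j)$ and $w(t):=\beta_c|z(t)|+\beta_o\tfrac{R_y}{\|C\|}\nu(t)$ on $[\tau_j,\tau_{j+1})$, so that $|v(\tau_j)|/w(\tau_j)=0$ and~\eqref{eq:sua} triggers only when $|v|/w\ge1$; the aim is a uniform lower bound on the time needed for this ratio to reach $1$. On any sub-interval where $\nu$ is constant, $v$ and $w$ are absolutely continuous, and from $|\dot z|\le\|A+BK\|\,|z|+\|BK\|(|z|+|z(\tau_j)|)+\|B\|\,|q_{\mu_j}(Kz(\tau_j))-Kz(\tau_j)|+\|L\|\,|q_{\nu_k}(\tilde y(t_k))|$ together with $|q_{\nu_k}(\tilde y(t_k))|\le(R_y+\Delta_y)\nu_{k^*(\tau_j)}$ (from the output scheme of Theorem~\ref{thm:quantOut}) and the bound $|q_{\mu_j}(Kz(\tau_j))-Kz(\tau_j)|\le\|K\|\,|z(\tau_j)|$ --- valid since $|Kz(\tau_j)|\le R_u\mu_j$, in the spirit of~\eqref{eq:bndqyx} --- one derives, using $w\ge\beta_c|z|$ and $w\ge\beta_o\tfrac{R_y}{\|C\|}\nu$ to absorb the $|z|$- and $\nu$-terms, a differential inequality $\tfrac{d}{dt}\tfrac{|v|}{w}\le b_1+b_2\tfrac{|v|}{w}+b_3\tfrac{|v|^2}{w^2}$ whose coefficients are uniformly bounded in $j$. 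Lemma~\ref{lem:solBound} (with $\overline\sigma=0$) then yields a $j$-independent time $\hat\tau>0$ before $|v|/w$ can reach $1$. At each output update $t_k\in(\tau_j,\tau_{j+1})$ the parameter $\nu$ drops to $\rho\nu$, so $w(t_k^+)\ge\rho\,w(t_k^-)$ (since $\tfrac{a+\rho b}{a+b}\ge\rho$ for $a,b\ge0$) and $|v|/w$ jumps up by at most $1/\rho$; there are at most $\lceil T/t_D\rceil$ such updates in the interval, so these jumps only reduce $\hat\tau$ by a fixed factor, giving $\tau_D>0$. Finally, the instantaneous-update situation described just before the theorem cannot occur within $[\tau_j,\tau_j+\tau_D)$, because $|v(t_k)|\to0$ as $t_k\to\tau_j^+$ while $w(t_k^+)\ge\beta_o\tfrac{R_y}{\|C\|}\nu_k>0$.

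I expect Step~2 to be the main obstacle, and within it the derivation of the differential inequality for $|v|/w$ with \emph{uniformly bounded} coefficients. Two features make it delicate: $w$ contains the piecewise-constant $\nu(t)$, which jumps down at the output sampling times and so forces a hybrid argument interleaving continuous growth with bounded multiplicative jumps; and the term $|z(\tau_j)|/w(t)$ arising from the input quantization error has no clean a~priori bound via $|z(\tau_j)|/|z(t)|$, and must instead be controlled on short sub-intervals on which $z$ cannot deviate far from $z(\tau_j)$, so that there $w\ge\tfrac{\beta_c}{2}|z(\tau_j)|$. The remaining ingredients --- the $\mu_j\to0$ recursion, the decomposition $x=\tilde x+z$ for attractivity, and the Lemma~\ref{lem:lyapStabErr}-type argument for Lyapunov stability --- are direct transcriptions of the corresponding parts of Theorem~\ref{thm:quantOut}.
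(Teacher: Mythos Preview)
Your Step~1 matches the paper's argument essentially verbatim.

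Your Step~2 takes a different and harder route. You keep $w(t)=\beta_c|z(t)|+\beta_o\tfrac{R_y}{\|C\|}\nu(t)$, which forces you into the hybrid argument you anticipate: jumps of $\nu$ at the output sampling times, counting how many such samples fall in one input interval, and a local control of $|z(\tau_j)|/w$. The paper sidesteps all of this with a single substitution. Since the output scheme of Theorem~\ref{thm:quantOut} guarantees $|\tilde x(t)|\le\tfrac{R_y}{\|C\|}\nu(t)$ for all $t\ge t_\eta$, the trigger threshold in~\eqref{eq:sua} dominates $\beta\,(|z(t)|+|\tilde x(t)|)$ for a suitable constant $\beta>0$, and the paper therefore chooses $w(t):=|z(t)|+|\tilde x(t)|$. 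This $w$ is \emph{continuous} (both $z$ and $\tilde x$ solve ODEs), so no jump bookkeeping is needed at all. Moreover, with this $w$ both quantized inputs are absorbed exactly by the trick of~\eqref{eq:bndqyx}: one gets $|q_{\mu_j}(Kz(\tau_j))|\le 2(|v|+\|K\|\,|z|)$ and hence $|q_{\mu_j}(Kz(\tau_j))|/w\le 2(|v|/w+\|K\|)$ directly, while the output sampling rule yields $|q_{\nu_k}(\tilde y(t_k))|/|\tilde x(t)|\le 2(\alpha+\|C\|)$. One then lands on a Riccati inequality $\tfrac{d}{dt}(|v|/w)\le b_5\big((|v|/w)^2+b_6(|v|/w)+b_7\big)$ with \emph{constant} coefficients and applies Lemma~\ref{lem:solBound} with $\overline\sigma=0$. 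The two obstacles you correctly flagged simply disappear with this choice of $w$; your approach is not wrong in principle, but the paper's replacement of $\nu(t)$ by $|\tilde x(t)|$ is the simplification that turns Step~2 into a routine repetition of the output case.
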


\begin{proof}
We have already demonstrated that the error dynamics~\eqref{eq:errSol} are asymptotically stable, and it would suffice to show that the dynamics of \eqref{eq:contLin} satisfy the same property.

{\em Asymptotic stability of system~\eqref{eq:contLin}:}
Assuming that the first statement holds, that is, there is $\tau_D>0$ such that $\tau_{j+1} - \tau_j \ge \tau_D$, for all $j \in \N$ (the existence of which will be proved later in the proof),
then for $\rho_u:= \max\{\overline \rho_u, e^{-\xi_c\tau_D/2}\} < 1$, we have
\[
\mu_{j+1} = \rho_u \mu_j + \rho_y \nu_{k^*(\tau_j)},
\]
where $\rho_y:=\frac{\|K\|}{R_u}\sqrt {\frac{\lambda_{\max}(P_c)}{\lambda_{\min}(P_c)}}(\zeta_1R_y+\zeta_2\Delta_y)$.
Since ${k^*(\tau_j)}$ goes to infinity as $j$ goes to infinity, and $\nu_k$ is a decreasing sequence, it follows that $\mu_j$ is uniformly bounded and eventually converges to $0$.

Stability in the sense of Lyapunov follows the same argument as used in the proof of Theorem~\ref{thm:quantOut} and Lemma~\ref{lem:lyapStabErr}. One can choose $|\tilde x(t_0)|$ and $|z(t_0)|$ sufficiently small such that the quantized values of $u$ and $\tilde y$ remain zero until the corresponding quantization parameters are small enough. After which the negative value of $\dot V_c(\cdot)$ guarantees that $z(\cdot)$ remains inside the prescribed ball.

{\em Dwell-time between two input updates:}
In the definition of sampling times $\tau_j$ given in \eqref{eq:su}, it is noted that $|\tilde x(t)| \le \frac{R_y}{\|C\|}\nu(t)$, for all $t \ge t_\eta$.
Thus, a lower bound on $\tau_{j+1}-\tau_j$ is the time it takes for $|K z(t) - K z(\tau_j)|$ to go from $0$ to $\beta (|z(t)| + |\tilde x(t)|)$, where $\beta:=\max\{\beta_c,\beta_o\}$.
We let $v(t)$ denote $K(z(t) - z(\tau_j))$ and let $w(t)$ denote $|z(t)| + |\tilde x(t)|$ in \eqref{eq:ddtBound}, $t \in [\tau_j,\tau_{j+1})$, to observe that
\begin{equation}\label{eq:vwRatio1}
\frac{|\dot v(t)|}{|w(t)|} \le \frac{\|KA\| \, |z(t)| + \|KB\| \,| q_{\mu_j}(Kz(\tau_j))| + \|KL\| |q_{\nu_k}(\tilde y(t_k))|}{|z(t)| + |\tilde x(t)|}.
\end{equation}
To find a bound on $\frac{|q_{\mu_j}(Kz(\tau_j))|}{|z(t)|}$, we use the same technique as in the derivation of \eqref{eq:bndqyx}.
Recall that $q_{\mu_j}(Kz(\tau_j)) = 0$, if $|Kz(\tau_j)| < \mu_j \Delta_u$;
otherwise, if $|Kz(\tau_j)| \ge \mu_j \Delta_u$, we have
\begin{align}
\frac{|q_{\mu_j}(Kz(\tau_j))|}{|z(t)|+|\tilde x(t)|} & \le \frac{\mu_j\left|q^u\left(\frac{Kz(\tau_j)}{\mu_j}\right)-\frac{Kz(\tau_j)}{\mu_j}\right| + |Kz(\tau_j)|}{|z(t)|+|\tilde x(t)|} \notag\\
&\le \frac{\mu_j \Delta_u + |Kz(\tau_j)|}{|z(t)|+|\tilde x(t)|} \notag \\
& \le \frac{2\,|Kz(\tau_j)|}{|z(t)|+|\tilde x(t)|} \notag \\
& \le 2\, \frac{|Kz(t) - Kz(\tau_j)| + |Kz(t)|}{|z(t)|+|\tilde x(t)|} \le 2 \left (\frac{|v(t)|}{|w(t)|} + \|K\|\right). \label{eq:bndqux}
\end{align}
Similarly, in \eqref{eq:bndqyx}, if we use the fact that $|\tilde y(t) - \tilde y(t_k)| \le \alpha |\tilde x(t)|$ because of the sampling rule \eqref{eq:syReala}, we get
\[
\frac{|q_{\nu_k}(\tilde y(t_k))|}{|\tilde x(t)|} \le 2(\alpha + \|C\|).
\]
Substituting these bounds in \eqref{eq:vwRatio1} results in
\[
\frac{|\dot v(t)|}{|w(t)|} \le b_1 + b_2 \frac{|v(t)|}{|w(t)|}
\]
for some appropriately defined constants $b_1,b_2 > 0$.
Using similar logic, it holds that for some $b_3, b_4 > 0$
\[
\frac{|\dot w(t)|}{|w(t)|} = \frac{|\dot z (t)| + |\dot{\tilde x}(t)|}{|z(t)| + |\tilde x(t)|} \le b_3 + b_4\frac{|v(t)|}{|w(t)|}.
\]
Thus, using the expression~\eqref{eq:ddtBound}, we obtain
\[
\frac{d}{dt} \frac{|v(t)|}{|w(t)|} \le b_5\left(\frac{|v(t)|^2}{|w(t)|^2} + b_6 \frac{|v(t)|}{|w(t)|} + b_7\right).
\]
Once again using Lemma~\ref{lem:solBound} (see Remark~\ref{rem:sig0} in \ref{app:proofs}), we can find a $\tilde \tau_D > 0$ such that
\[
\frac{|v(t)|}{|w(t)|} \le \frac{rb_6}{2} \tan(b_8(t-\tau_j)), \qquad t-\tau_j < \tilde \tau_{D},
\]
for some $r$ large enough and $b_8 : = \frac{r b_6 b_{5}}{2}$. Hence,
\[
\tau_{j+1} - \tau_j \ge \tau_{D} := \frac{1}{b_{8}}\arctan\left(\frac{2\,\beta}{r b_6}\right).
\]
It is seen that the term $\tau_{D}$ is strictly positive and hence the minimum time between two updates of the control values have a uniform lower bound.
\end{proof}


\section{Illustrative example}\label{sec:example}
Consider the system with following matrices:
\[
A:= \begin{bmatrix}1 & 1\\ 0 & 0.5 \end{bmatrix}; \quad B:= \begin{bmatrix}0 \\ 1 \end{bmatrix}; \quad C:= \begin{bmatrix} 1 & 0\end{bmatrix}.
\]
Since the matrix $A$ doesn't have any complex eigenvalues, it suffices to take $\eta = 2$.
For the state estimation part, we choose the output injection gain $L=[4~3]^\top$, $Q_o = [{1\atop 0.5} {0.5 \atop 1}]$, and $\eps_o = 0.75$ which results in $P_o:= [{1.63 \atop -1.47}{-1.47 \atop 1.93}]$ and $\alpha=0.09$.
For quantization of the sampled output, we pick a quantizer $q^y$ which rounds off the real-valued output to the nearest integer, so that $\Delta_y = 1$.
The value of parameter $\xi_o = 0.1 \frac{(1-\varepsilon_0)\lambda_{\min}(Q_o)}{\lambda_{\max}(P_o)} = 0.0038$ results in $\chi_o = 37.54$. Finally by selecting $\bar \rho = 0.975$, it is seen that the number of quantization levels required for convergence of state estimation error are
\[
\frac{R_y}{\Delta_y} \ge 126.4,
\]
that is, we need $\lceil \log_2 (126.4) \rceil = 7$ bit quantizer for the output. It must be recalled that no optimality criterion was placed in obtaining the required number of bits for convergence and it could be reduced for other choices of matrices $L$ and $Q_o$.

For the control input, the feedback gain matrix $K=-[6~ 4.5]$ is chosen with $Q_c = Q_o$, and $\eps_c=0.85$. This results in $P_c = [{2.5 \atop 0.25}{0.25 \atop 0.5}]$, and $\beta_c = 0.38$. For the quantization, we again pick $q^u$ such that its input is rounded of to the nearest integer, so that $\Delta_u = 1$.
The value of parameter $\xi_c = 0.1 \frac{(1-\varepsilon_c)\lambda_{\min}(Q_c)}{\lambda_{\max}(P_c)} = 0.0048$ results in $\chi_c = 9.94$. Finally by selecting $\bar \rho_u = 0.85$, it is seen that the desired value of $R_u = 318$, so we need $\lceil \log_2 (318) \rceil = 9$ bit quantizer for the control input.
The results of the simulation are given in Figure~\ref{fig:simLinCase}.
As expected, the states of the system converge to zero under the proposed algorithm, and the plots in Figures~\ref{fig:outData}, and~\ref{fig:conData} show the sampled and real values of the output and input, respectively.

\begin{figure}[!t]
     \centering
     \subfigure[The plot contains the continuous-time signal $\tilde y$ (continuous curve in blue) with its space-quantized and time-sampled values transmitted to the controller (dots in red color). The quantization parameter $\nu$ converges to zero very quickly, and hence a very little difference is observed between the quantized and real values.]{
          \label{fig:outData}
          \includegraphics[height=1.65in]{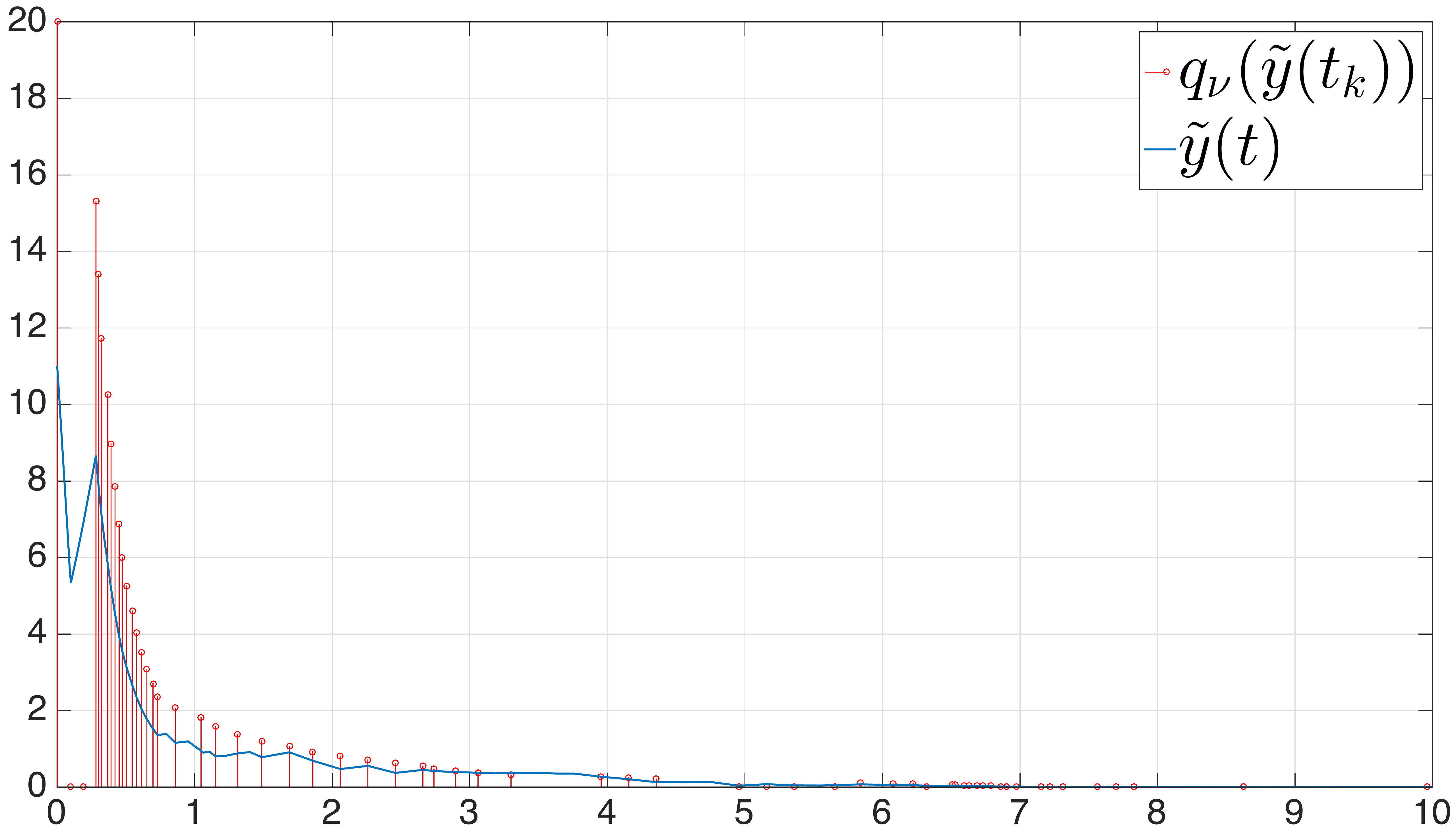}}
     \hspace{0.05in}
     \subfigure[Plot of the continuous-time control input with its space-quantized and time-sampled values transmitted to the plant. Initially, the value of the quantization parameter $\mu$ is very large since it waits for $\tilde y$ to converge, and afterwards, with smaller values of $\mu$, the states start converging to the origin.]{
          \label{fig:conData}
          \includegraphics[height=1.65in]{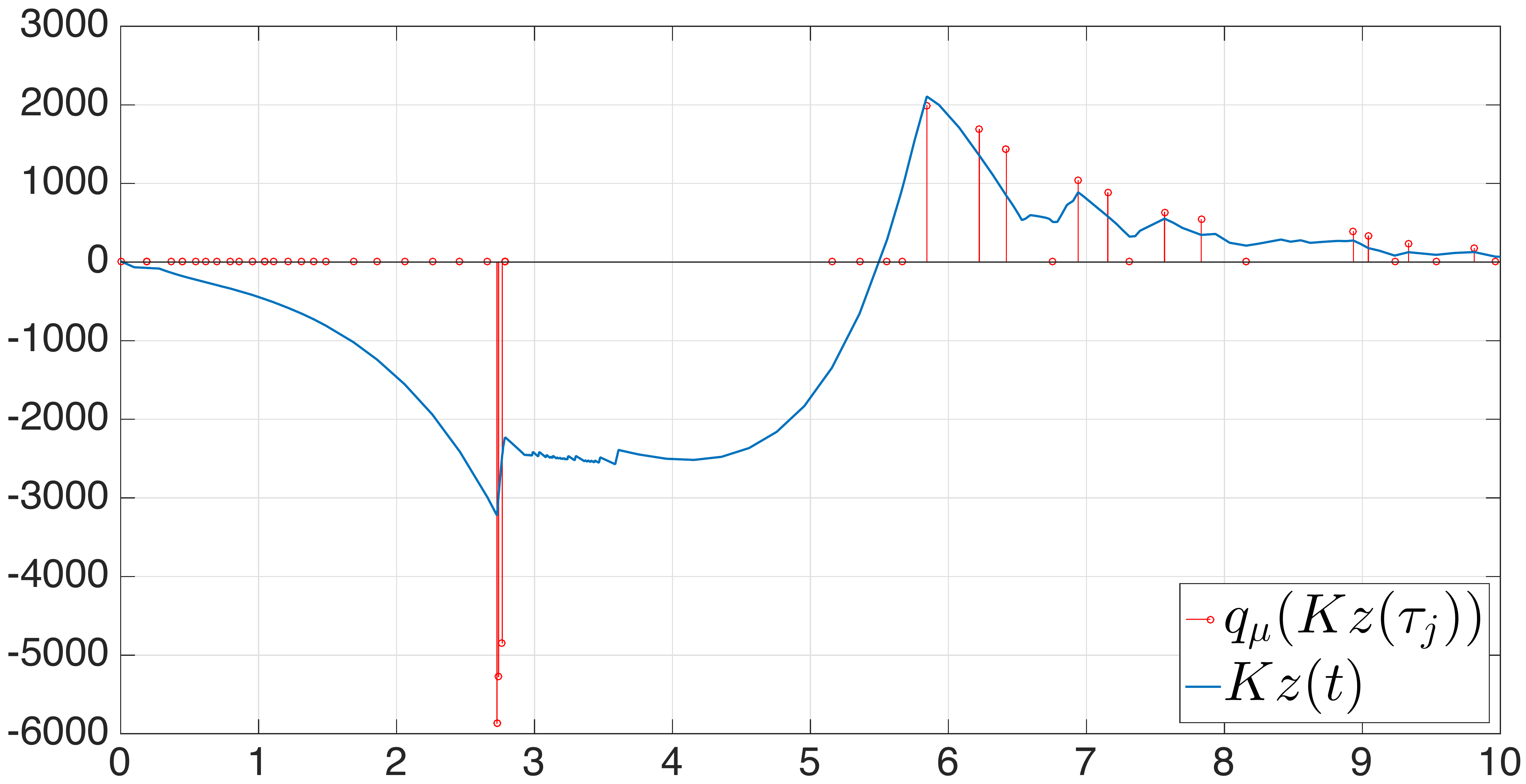}}\\
     \vspace{.05in}
     \caption{Input and output signals resulting from the design of a stabilizing controller with time-sampled and space-quantized measurements.}
     \label{fig:simLinCase}
\end{figure}

%

%

\section{Conclusions}


For the future work, certain issues could be investigated further. The choice of parameters $\alpha$ and $\beta$ directly affect the inter-sampling time observed in the simulations. So it is of interest to maximize them by choosing appropriate feedback and injection gain matrices while keeping the number of required quantization levels to a minimum.
Also, it is possible that the proposed methodology could be extended to nonlinear systems.
For that, some modifications need to be introduced which do not require exact computation of the discretized model of the plant (which is impossible to compute in nonlinear systems). This will also alleviate the need of computing exponentials of matrices in our algorithms.

\appendix

\section{Proofs of Lemmas}\label{app:proofs}

\begin{proofof}{Lemma~\ref{lem:relYx}}
For $t \ge t_k$, we have,
\[
\tilde x(t) = e^{A(t-t_k)} \tilde x(t_k) - \int_{t_k}^t e^{A(t-s)}L \, q_{\nu_k}(\tilde y(t_k)) \, ds
\]
or equivalently,
\begin{equation}\label{eq:xtil1}
e^{-A(t-t_k)} \tilde x(t) = \tilde x(t_k) - \int_{t_k}^t e^{A(t_k-s)} L \, ds \, q_{\nu_k}(\tilde y(t_k)).
\end{equation}
Using similar calculations, one can solve the system~\eqref{eq:errSol} over the interval $[t_{k-1},t_k]$ to obtain:
\begin{equation}\label{eq:xtil2}
e^{-A(t_k-t_{k-1})} \tilde x(t_k) = \tilde x(t_{k-1}) - \int_{t_{k-1}}^{t_k} e^{A(t_{k-1}-s)} L \, ds \, q_{\nu_{k-1}}(\tilde y(t_{k-1})).
\end{equation}
Since the solution of \eqref{eq:errSol} are absolutely continuous, one can substitute the value of $\tilde x(t_{k-1})$ from \eqref{eq:xtil2} in \eqref{eq:xtil1} to obtain
\begin{equation}\label{eq:xtil3}
e^{-A(t-t_{k-1})} \tilde x(t) = \tilde x(t_{k-1}) - \int_{t_k}^{t}e^{A(t_{k-1}-s)} L \, ds \, q_{\nu_k}(\tilde y(t_k)) - \int_{t_{k-1}}^{t_k} e^{A(t_{k-1}-s)} L \, ds \, q_{\nu_{k-1}}(\tilde y(t_{k-1})).
\end{equation}
Equations~\eqref{eq:xtil1} and \eqref{eq:xtil2} show that \eqref{eq:errYk} holds for $\eta=2$. For higher values of $\eta$, one can continue in similar manner and solve \eqref{eq:errSol} on precedent intervals $[t_{k-2},t_{k-1}], \dots, [t_{k-\eta+1},t_{k-\eta+2}]$ to arrive at the desired expression in~\eqref{eq:errYk}.
\end{proofof}

\begin{proofof}{Lemma~\ref{lem:lyapStabErr}}
From the update rule specified for the quantization parameter $\nu$, we have $\nu_{k+1} := \rho \nu_k$, for $k \ge \eta$. Thus, there exists $\bar k$, such that $\nu_k < \bar \eps$, for $k \ge \bar k \ge \eta$.
It remains to show that $q_{\nu_k}(\tilde y(t_k)) = 0$ for sufficiently small $|\tilde x(t_0)|$.
Towards that end, let $a_0:=\|e^{A(T+t_{\eta-1}-t_0)}\|$, $a_\eta:=\|e^{A(\bar k-\eta)T}\|$, and $a:=a_0a_\eta$, so that the solution of $\dot{\tilde x} = A \tilde x$ satisfies
\[
|\tilde x(t)| \le a \, |\tilde x(t_0)|, \quad t\in[t_0,t_{\bar k}).
\]
Choose $\nu_0 = \nu_1 = \dots = \nu_\eta$ large enough, and $|\tilde x(t_0)|$ small enough such that $|\tilde x(t_0)| < \frac{\rho_{\min}^{\bar k}\Delta_y}{a\|C\|}\nu_0 =: \delta_0$, where $\rho_{\min}:=\min \{\overline \rho, e^{-\delta T/2}\}$.
This would imply that, for $0 \le k \le \eta-1$, $t \in [t_{k},t_{k+1})$, $|\tilde x(t)| \le \frac{\rho_{\min}^{\bar k}\Delta_y}{a_\eta\|C\|}\nu_0$. In particular, for $0 \le k \le \eta$, $|\tilde y(t_k)| \le \Delta_y \nu_k$, and therefore $q_{\nu_k}(\tilde y(t_k))=0$.
Since $|\tilde x(t_\eta)| < \frac{\Delta_y}{a_\eta\|C\|} \rho_{\min}^{\bar k} \nu_\eta$ and $\nu_{\bar k} \ge \rho_{\min}^{\bar k} \nu_\eta$, it follows that for $\eta+1 \le k \le \bar k$, $t \in [t_{k-1},t_{k})$, $|\tilde x(t)| \le \frac{\rho_{\min}^{\bar k}\Delta_y}{\|C\|}\nu_\eta \le \frac{\Delta_y}{\|C\|} \nu_k$. Consequently, $q_{\nu_k}(\tilde y(t_k)) = 0$, for each $\eta+1 \le k \le \bar k$.
\end{proofof}

\begin{proofof}{Lemma~\ref{lem:solBound}}
For some positive integer $r$, let $\tilde a_3:= (r^2+1) \frac{a_2^2}{4} $ be such that $\tilde a_3 \ge a_3$. By comparison lemma, $X(t) \le Z(t)$, for each $t \ge t_k$, where $Z(\cdot)$ satisfies the differential equation
\[
\frac{dZ}{dt} = a_1 e^{\overline \sigma(t-t_k)}(Z^2 + a_2 Z + \tilde a_3 )
\]
with initial condition $Z(t_k) = 0$, so that any upper bound on $Z(\cdot)$ acts as a bound on $X(\cdot)$. It is noted that
\[
\int_{0}^{Z(t)}\frac{dZ}{Z^2 + a_2 Z + \tilde a_3} = a_1 \int_{t_k}^{t}e^{\overline \sigma(s-t_k)}ds
\]
or equivalently,
\[
\int_{0}^{Z(t)}\frac{dZ}{\left(Z +\frac{a_2}{2}\right)^2 + \left(\tilde a_3-\frac{a_2^2}{4}\right)} = \frac{a_1}{\overline \sigma} \left(e^{\overline \sigma(t-t_k)} - 1\right) .
\]
Using the definition of $\tilde a_3$, and integrating both sides, we get
\begin{equation*}
\frac{2}{r a_2}\arctan\left(\frac{2}{ra_2}\left(Z +\frac{a_2}{2}\right)\right) - \frac{2}{ra_2}\arctan \left(\frac{1}{r}\right) = \frac{a_1}{\overline \sigma} \left(e^{\overline \sigma(t-t_k)} - 1\right)
\end{equation*}
Letting $a_4:= \frac{ra_1a_2}{2\overline \sigma}$, we get
\begin{align*}
Z(t) &= \frac{ra_2}{2}\tan\left(a_4(e^{\overline \sigma(t-t_k)}-1) + \arctan\left(\frac{1}{r}\right)\right)- \frac{a_2}{2} \\
& = \frac{a_2}{2} \left( \frac{r\tan(a_4(e^{\overline \sigma(t-t_k)}-1)) + 1}{1-\frac{\tan(a_4(e^{\overline \sigma(t-t_k)}-1))}{r}} -1\right).
\end{align*}
In the above expression, note that the term $\tan(a_4(e^{\overline \sigma(t-t_k)}-1))$ takes the value $0$ at $t=t_k$ and is increasing monotonically with $t$.
Thus, if $\frac{\tan(a_4(e^{\overline \sigma(t-t_k)}-1))}{r} < 1$, or equivalently, $(t-t_k)< \frac{1}{\overline \sigma}\log \left(1+ \frac{1}{a_4} \arctan r\right) =: \tilde t_D$, then for $t-t_k < \tilde t_D$, we have
\[
\frac{r\tan(a_4(e^{\overline \sigma(t-t_k)}-1)) + 1}{1-\frac{\tan(a_4(e^{\overline \sigma(t-t_k)}-1))}{r}} < r\tan(a_4(e^{\overline \sigma(t-t_k)}-1)) + 1,
\]
and it follows that
\[
Z(t) \le \frac{ra_2}{2} \tan(a_4(e^{\overline \sigma(t-t_k)}-1)), \qquad t-t_k < \tilde t_D.
\]
Noting that $X(t) \le Z(t)$, for all $t \ge t_k$, the desired result follows.
\end{proofof}
\begin{rem}\label{rem:sig0}
In the statement of Lemma~\ref{lem:solBound}, if $\overline \sigma = 0$, then we get
\[
X(t) \le \frac{ra_2}{2} \tan(a_4(t-t_k)), \qquad t-t_k < \tilde t_D
\]
where $a_4:= \frac{ra_1a_2}{2}$ and $\tilde t_D:= \arctan r$.
\end{rem}

\section{Basic Facts}\label{app:facts}

\paragraph*{Facts} We list a series of facts that were used in the proof of Theorem~\ref{thm:quantOut}.
These relate to bounding the norm of matrix $N_{k,\eta}(\cdot)$, the norm of its pseudo-inverse, and the norm of the matrix obtained by differentiating $N_{k,\eta}(\cdot)$.
\begin{facts}
\item \label{fact:Nbound}There exist constants $c,\sigma > 0$, such that $\|N_{k,\eta}(t)\| \le c \, e^{\sigma (t-t_{k-\eta+1})}$, for all $t \ge t_k$.
\item \label{fact:Ninv} If $N_{k,\eta}(t)$ has rank $n$, then there exists a matrix $N_{k,\eta}^\dagger (t)$ such that $N_{k,\eta}^\dagger(t) \cdot N_{k,\eta}(t) = I_{n \times n}$ and for some positive constants $c_1,\sigma_1$, we have
\[
\|N_{k,\eta}^\dagger(t)\| \le c_1 e^{\sigma_1(t-t_{k-\eta+1})}
\]
and consequently,
$
|N_{k,\eta} (t) x| \ge \frac{|x|}{\|N_{k,\eta}^\dagger(t)\|}
$, 
$ \forall \, x \in \R^n$.
\item \label{fact:Ndot} Let $\dot N_{k,\eta}(t)$ denote the matrix obtained by taking the derivative of each element of $N_{k,\eta}(t)$ with respect to time, then there exist $c_2>0, \sigma_2>0$ such that
\[
\|\dot N_{k,\eta}(t) \| \le c_2 e^{\sigma_2 (t-t_{k-\eta+1})}.
\]
\end{facts}

\begin{rem}
There are different ways in which we can compute the constant $c, c_1, c_2$ and $\sigma, \sigma_1, \sigma_2$.
For example, a conservative way to compute, $c$ and $\sigma$ is to observe that
\[
N_{k,\eta}(t) = \bdiag(C, C, \dots, C) \times
\col (e^{-A(t-t_k)}, e^{-A(t-t_{k-1})}, \dots, e^{-A(t-t_{k-\eta+1})})
\]
where the notation $\col(A_1,A_2)$ denotes $\left[ \begin{smallmatrix} A_1 \\ A_2\end{smallmatrix}\right]$. Hence, one can take $c=n \eta \,\sqrt{n\eta p} \, \|C\| \cdot \bar c$ and $\sigma = \bar \sigma$, where $\bar c, \bar \sigma$ are such that $\|e^{-At}\| \le \bar c \, e^{\bar \sigma t}$, for all $t \ge 0$.
\end{rem}

\end{document}